\def\NAT@def@citea{\def\@citea{\NAT@separator}}
\theoremstyle{plain}
\newtheorem{theorem}{Theorem}[section]
\newtheorem{lemma}[theorem]{Lemma}
\newtheorem{proposition}[theorem]{Proposition}
\theoremstyle{definition}
\newtheorem{definition}[theorem]{Definition}
\theoremstyle{remark}
\newcommand{\suppress}[1]{} 
\begin{document}

\articletype{Research Paper}

\title{\begin{center}Positive weak solutions of a double-phase variable exponent problem with a fractional-Hardy-type singular potential\\ and superlinear nonlinearity\end{center}}

\author{\name{Mustafa Avci\thanks{CONTACT M.~Avci. Email:  mavci@athabascau.ca (primary) \& avcixmustafa@gmail.com}}
\affil{Faculty of Science and Technology, Applied Mathematics, Athabasca University, AB, Canada}}

\maketitle

\begin{abstract}
In the present paper,  we study a double-phase variable exponent problem which is set up within a variational framework including a singular potential of fractional-Hardy-type. We employ the  Mountain-Pass theorem and the strong minimum principle to obtain the existence of at least one nontrivial positive weak solution.
\end{abstract}

\begin{keywords}
Positive weak solution; double phase operator; variable exponents; fractional-Hardy-type singularity; the Mountain-Pass theorem; the Ambrosetti–Rabinowitz condition; strong minimum principle.
\end{keywords}

\section{Introduction}
In this article, we study the following singular double-phase variable exponent problem

\begin{equation}\label{e1.1}
\begin{cases}
\begin{array}{rlll}
-\mathcal{D}(x,\mu,\nabla u)+\mathcal{S}(x,\mu,u)& =\lambda f(x,u) \text{ in }\Omega, \\
u & =0   \text{ on }\partial \Omega,\tag{$\mathcal{P}_{\lambda}$}
\end{array}
\end{cases}
\end{equation}
where
\begin{align*}
&\mathcal{D}(x,\mu,\nabla u):=\mathrm{div}(|\nabla u|^{p(x)-2}\nabla u+\mu(x)|\nabla u|^{q(x)-2}\nabla u), \\ &\mathcal{S}(x,\mu,u):=\frac{|u|^{p(x)-2}u}{|x|^{\alpha p(x)}}+\mu(x)\frac{|u|^{q(x)-2}u}{|x|^{\alpha q(x)}}.
\end{align*}
We assume that $\Omega$ is a bounded domain in $\mathbb{R}^N$ $(N\geq 3)$ with Lipschitz boundary $\partial \Omega$; $f$ is a Caratheodry function; $1<p,q \in C(\overline{\Omega })$; $0\leq \mu(\cdot)\in L^\infty(\Omega)$; $\lambda>0$ and $\alpha \in (0,1)$ are real parameters.\\

Problems involving the double phase operator have been extensively studied in the literature due to their significant applications in various fields. The double phase operator, defined as
$$
\mathrm{div}(|\nabla u|^{p(x)-2}\nabla u+\mu(x)|\nabla u|^{q(x)-2}\nabla u), \quad u \in W_{0}^{1,\mathcal{H}}(\Omega),
$$
is intrinsically connected to the two-phase integral functional
$$
u \to \int_{\Omega} \left(|\nabla u|^{p(x)} + \mu(x)|\nabla u|^{q(x)}\right) \mathrm{d}x, \quad u \in W_{0}^{1,\mathcal{H}}(\Omega),
$$
where $W_{0}^{1,\mathcal{H}}(\Omega)$ denotes a Musielak-Orlicz Sobolev space. This functional exhibits changing ellipticity properties depending on the set where the weight function $\mu(\cdot)$ vanishes, leading to two distinct phases of elliptic behavior. The pioneering work of Zhikov \cite{Zhikov1} first investigated this functional with constant exponents to model strongly anisotropic materials. In elasticity theory, $\mu(\cdot)$ encodes geometric information about composites consisting of two different materials with power hardening exponents $p(\cdot)$ and $q(\cdot)$ \cite{Zhikov2}. Early mathematical treatments of these two-phase integrals were conducted by Baroni-Colombo-Mingione \cite{Baroni1,Baroni2,Baroni3}, Colombo-Mingione \cite{Colombo1,Colombo2}, and Ragusa-Tachikawa \cite{Ragusa}, with further contributions from De Filippis-Mingione \cite{Filippis} for nonautonomous integrals.

While numerous authors have established existence and multiplicity results for double phase problems with constant exponents \cite{Biagi,Colasuonno,Gasinski1,Gasinski2,Liuw,Papageorgiou,Perera,Zeng2}, the variable exponent case remains comparatively less explored. Recent contributions in this direction include works by Amoroso-Bonanno-D’Aguì-Winkert \cite{Amoroso}, Abergi-Bennouna-Benslimane-Ragusa \cite{Aberqi}, Bahrouni-Radulescu-Winkert \cite{Bahrouni}, Cen-Kim-Kim-Zeng \cite{Cen}, Crespo-Blanco-Gasinski-Harjulehto-Winkert \cite{Crespo-Blanco}, Leonardi-Papageorgiou \cite{Leonardi}, Liu-Pucci \cite{Liu}, Kim-Kim-Oh-Zeng \cite{Kim}, Vetro-Winkert \cite{Vetro}, Zeng-Radulescu-Winkert \cite{Zeng1}, and Avci \cite{Avci3}.

The distinctive feature of problem \eqref{e1.1} is the presence of the singular Hardy-type potential in the term $\mathcal{B}(x,\mu,u)$, which introduces critical singularities at the origin. This represents a significant departure from the existing literature on double phase problems. Physically, such singular potentials model phenomena involving strongly attractive forces or singular sources, appearing in quantum mechanics, molecular physics, and relativistic field theories. In material science, they can describe composites with point defects or singular material properties that intensify near specific locations. Mathematically, the fractional-type-Hardy potential introduces substantial challenges due to its critical nature and the lack of compactness in the associated energy functional which requires a careful treatment of the underlying Musielak-Orlicz Sobolev spaces.

Our work aims to bridge the gap between double phase operators and singular potentials in the variable exponent setting, establishing new existence results that extend the current understanding of these complex nonlinear phenomena.

\section{Mathematical Background and Preliminaries}

We start with some basic concepts of variable Lebesgue-Sobolev spaces. For more details, and the proof of the following propositions, we refer the reader to \cite{Cruz,Diening,Fan,Radulescu}.\\
Define
\begin{equation*}
C_{+}\left( \overline{\Omega }\right) =\left\{h\in C\left( \overline{\Omega }\right): 1<h(x) \text{ for all\ }x\in
\overline{\Omega }\right\} .
\end{equation*}
For $h\in C_{+}( \overline{\Omega }) $ denote
\begin{equation*}
h^{-}:=\underset{x\in \overline{\Omega }}{\min }h( x), \quad  h^{+}:=\underset{x\in \overline{\Omega }}{\max}h(x) <\infty .
\end{equation*}
For a given $h\in C_{+}\left( \overline{\Omega }\right) $, we define \textit{the
variable exponent Lebesgue space} by
\begin{equation*}
L^{h(x)}(\Omega) =\left\{ u\mid u:\Omega\rightarrow\mathbb{R}\text{ is measurable},\int_{\Omega }|u(x)|^{h(x) }dx<\infty \right\},
\end{equation*}
equipped with the Luxemburg norm given by
\begin{equation*}
|u|_{h(x)}=\inf \left\{ \lambda>0:\int_{\Omega }\left\vert \frac{u(x)}{\lambda }\right\vert ^{h(x)}dx\leq 1\right\}.
\end{equation*}
With this norm, $(L^{h(x)}(\Omega),|\cdot|_{h(x)})$ becomes a separable and reflexive Banach space.
\begin{proposition}[H\"{o}lder's Inequality]\label{Prop:2.1} For any $u\in L^{h( x) }(\Omega)$ and $v\in L^{h^{\prime}(x)}(\Omega)$, we have
\begin{equation*}
\int_{\Omega}|uv|dx\leq C(h^{-},(h^{-})^{\prime})|u|_{h(x)}|v|_{h^{\prime}(x)},
\end{equation*}
where $L^{h^{\prime }(x) }(\Omega) $ is conjugate space of $L^{h( x) }(\Omega)$ such that $\frac{1}{h( x) }+\frac{1}{h^{\prime}( x)}=1$.
\end{proposition}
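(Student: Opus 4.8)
The plan is to obtain the inequality from the elementary pointwise Young inequality together with a normalization by the Luxemburg norms, the standard route for variable-exponent Hölder estimates. First I would dispose of the trivial cases: since $|\cdot|_{h(x)}$ and $|\cdot|_{h'(x)}$ are norms, $|u|_{h(x)}=0$ forces $u=0$ a.e.\ on $\Omega$, whence both sides of the claim vanish, and symmetrically for $v$; so from now on set $\lambda_1:=|u|_{h(x)}>0$, $\lambda_2:=|v|_{h'(x)}>0$ and put $\widetilde u:=u/\lambda_1$, $\widetilde v:=v/\lambda_2$. (Because $h\in C_+(\overline\Omega)$ with $h^->1$ and $h^+<\infty$, its conjugate $h'=h/(h-1)$ again lies in $C_+(\overline\Omega)$, so $L^{h'(x)}(\Omega)$ is a space of the same type and $(h')^-=\min_{\overline\Omega}h'$ is a well-defined number $>1$.) The key auxiliary fact is the \emph{unit ball property} linking the norm with the modular $\rho_g(w):=\int_\Omega|w(x)|^{g(x)}\,dx$: for every $\lambda>|w|_{g(x)}$ one has $\rho_g(w/\lambda)\le1$ by definition of the infimum together with the monotonicity of $\lambda\mapsto\rho_g(w/\lambda)$, and letting $\lambda\downarrow|w|_{g(x)}$ the monotone convergence theorem gives $\rho_g\!\big(w/|w|_{g(x)}\big)\le1$. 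Applying this with $(w,g)=(u,h)$ and with $(w,g)=(v,h')$ yields $\int_\Omega|\widetilde u|^{h(x)}\,dx\le1$ and $\int_\Omega|\widetilde v|^{h'(x)}\,dx\le1$.

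Next I would use, for a.e.\ $x\in\Omega$, the Young inequality $ab\le a^{s}/s+b^{s'}/s'$ with $a=|\widetilde u(x)|$, $b=|\widetilde v(x)|$, $s=h(x)\in(1,\infty)$ and $s'=h'(x)$; this is legitimate pointwise since $\tfrac1{h(x)}+\tfrac1{h'(x)}=1$. Integrating over $\Omega$ and using $h(x)\ge h^-$ and $h'(x)\ge(h')^-$ pointwise,
\begin{align*}
\int_\Omega|\widetilde u\,\widetilde v|\,dx
&\le\int_\Omega\frac{|\widetilde u|^{h(x)}}{h(x)}\,dx+\int_\Omega\frac{|\widetilde v|^{h'(x)}}{h'(x)}\,dx\\
&\le\frac1{h^-}\int_\Omega|\widetilde u|^{h(x)}\,dx+\frac1{(h')^-}\int_\Omega|\widetilde v|^{h'(x)}\,dx
\le\frac1{h^-}+\frac1{(h')^-}.
\end{align*}
Multiplying through by $\lambda_1\lambda_2$ then gives $\int_\Omega|uv|\,dx\le C\,|u|_{h(x)}\,|v|_{h'(x)}$ with $C=\tfrac1{h^-}+\tfrac1{(h')^-}\le2$, a constant depending only on the exponent bounds of $h$, which is exactly the form asserted (for constant exponent it reduces to the classical constant $\tfrac1p+\tfrac1{p'}=1$). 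If one prefers to avoid any a priori integrability assumption on the product $uv$, the same computation can first be run with $u$ and $v$ replaced by their truncations at level $n$, producing the estimate with the same $n$-independent constant, after which one lets $n\to\infty$ by Fatou's lemma on the left and monotone convergence on the right.

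The only genuinely delicate point is the unit ball property used in the first step; everything afterwards is a direct pointwise Young estimate followed by the crude exponent bounds $1/h(x)\le1/h^-$ and $1/h'(x)\le1/(h')^-$. That the normalizing scalar $|w|_{g(x)}$ indeed produces a modular not exceeding $1$ — and, more broadly, the convenient equivalences between norm and modular that one uses implicitly whenever writing $|w|_{g(x)}\le1\Leftrightarrow\rho_g(w)\le1$ — is where the hypothesis $g^+<\infty$ enters, ensuring the relevant continuity and the applicability of the convergence theorems; these modular facts are carried out in detail in \cite{Cruz,Diening,Fan}, to which the statement already refers.
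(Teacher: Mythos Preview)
Your argument is correct and is precisely the standard proof of the variable-exponent H\"older inequality: normalize by the Luxemburg norms, invoke the unit-ball property $\rho_g(w/|w|_{g(x)})\le1$, apply the pointwise Young inequality, and bound $1/h(x)$ and $1/h'(x)$ from above by $1/h^-$ and $1/(h')^-$. The paper does not supply its own proof of this proposition at all; it explicitly defers to the references \cite{Cruz,Diening,Fan,Radulescu}, and the argument you have written is exactly the one found there.

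One cosmetic remark: the constant you obtain is $C=\tfrac1{h^-}+\tfrac1{(h')^-}$, and since $h'(x)=h(x)/(h(x)-1)$ is decreasing in $h(x)$ one has $(h')^-=(h^+)'$, so your constant in fact depends on both $h^-$ and $h^+$. The paper's notation $C(h^-,(h^-)')$ is slightly imprecise in this regard (indeed $(h^-)'$ is determined by $h^-$ alone), but this is a harmless labeling issue rather than a mathematical discrepancy, since in any case $C\le2$.
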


The convex functional $\rho :L^{h(x) }(\Omega) \rightarrow\mathbb{R}$ defined by
\begin{equation*}
\rho(u) =\int_{\Omega }|u(x)|^{h(x)}dx,
\end{equation*}
is called modular on $L^{h(x) }(\Omega)$.

\begin{proposition}\label{Prop:2.2} If $u,u_{n}\in L^{h(x) }(\Omega)$, we have
\begin{itemize}
\item[$(i)$] $|u|_{h(x) }<1 ( =1;>1) \Leftrightarrow \rho(u) <1 (=1;>1);$
\item[$(ii)$] $|u|_{h( x) }>1 \implies |u|_{h(x)}^{h^{-}}\leq \rho(u) \leq |u|_{h( x) }^{h^{+}}$;\newline
$|u|_{h(x) }\leq1 \implies |u|_{h(x) }^{h^{+}}\leq \rho(u) \leq |u|_{h(x) }^{h^{-}};$
\item[$(iii)$] $\lim\limits_{n\rightarrow \infty }|u_{n}|_{h(x)}=0\Leftrightarrow \lim\limits_{n\rightarrow \infty }\rho (u_{n})=0; \newline
\lim\limits_{n\rightarrow \infty }|u_{n}|_{h(x)}=\infty \Leftrightarrow \lim\limits_{n\rightarrow \infty }\rho(u_{n})=\infty$.
\end{itemize}
\end{proposition}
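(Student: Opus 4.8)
The plan is to reduce all three items to a single identity: for every $u\in L^{h(x)}(\Omega)$ with $u\neq 0$ the infimum defining the Luxemburg norm is attained and
$\rho\!\left(u/|u|_{h(x)}\right)=1$.
To prove this I would fix $u\neq 0$ and analyze $\varphi(\lambda):=\rho(u/\lambda)=\int_{\Omega}|u(x)|^{h(x)}\lambda^{-h(x)}\,dx$ for $\lambda\in(0,\infty)$. First, $\varphi(\lambda)<\infty$ for all $\lambda>0$: since $u$ lies in the space there is $\lambda^{\ast}$ with $\varphi(\lambda^{\ast})\leq 1$; for $\lambda\geq\lambda^{\ast}$ monotonicity gives $\varphi(\lambda)\leq\varphi(\lambda^{\ast})$, and for $\lambda<\lambda^{\ast}$ one has $\varphi(\lambda)\leq(\lambda^{\ast}/\lambda)^{h^{+}}\varphi(\lambda^{\ast})<\infty$. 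Because $\overline{\Omega}$ is compact and $h\in C_{+}(\overline{\Omega})$, we have $1<h^{-}\leq h^{+}<\infty$, so on any interval $[\delta,\infty)$ the integrand is dominated by the integrable function $|u(\cdot)/\delta|^{h(\cdot)}$; dominated convergence then gives continuity of $\varphi$ on $(0,\infty)$, and $\varphi$ is strictly decreasing since $\lambda\mapsto\lambda^{-h(x)}$ is. Moreover $\varphi(\lambda)\to 0$ as $\lambda\to\infty$ (dominated convergence), while $\varphi(\lambda)\to\infty$ as $\lambda\to 0^{+}$: choosing $c>0$ with $E:=\{x:|u(x)|\geq c\}$ of positive measure, one has $\varphi(\lambda)\geq(c/\lambda)^{h^{-}}|E|$ for $\lambda<c$. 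The intermediate value theorem then produces a unique $\lambda_{0}$ with $\varphi(\lambda_{0})=1$, and $\{\lambda:\varphi(\lambda)\leq 1\}=[\lambda_{0},\infty)$, so $|u|_{h(x)}=\lambda_{0}$ and $\rho\!\left(u/|u|_{h(x)}\right)=1$.

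Granting this identity, set $\lambda_{0}:=|u|_{h(x)}$ (the case $u=0$ being trivial). For part $(i)$, from $1=\int_{\Omega}|u|^{h(x)}\lambda_{0}^{-h(x)}\,dx$ we read off: if $\lambda_{0}<1$ then $\lambda_{0}^{-h(x)}>1$ pointwise, so $\rho(u)=\int_{\Omega}|u|^{h(x)}\,dx<1$; if $\lambda_{0}=1$ then $\rho(u)=\varphi(1)=1$; if $\lambda_{0}>1$ then $\lambda_{0}^{-h(x)}<1$, so $\rho(u)>1$. Since the three alternatives for $|u|_{h(x)}$ and the three for $\rho(u)$ are mutually exclusive and exhaustive, the equivalences follow. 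For part $(ii)$ I would again use $1=\int_{\Omega}|u|^{h(x)}\lambda_{0}^{-h(x)}\,dx$ with the elementary bounds on $\lambda_{0}^{-h(x)}$: if $\lambda_{0}\geq 1$ then $\lambda_{0}^{-h^{+}}\leq\lambda_{0}^{-h(x)}\leq\lambda_{0}^{-h^{-}}$, hence $\lambda_{0}^{-h^{+}}\rho(u)\leq 1\leq\lambda_{0}^{-h^{-}}\rho(u)$, i.e. $\lambda_{0}^{h^{-}}\leq\rho(u)\leq\lambda_{0}^{h^{+}}$; if $\lambda_{0}\leq 1$ the inequalities reverse, giving $\lambda_{0}^{h^{+}}\leq\rho(u)\leq\lambda_{0}^{h^{-}}$.

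For part $(iii)$: if $|u_{n}|_{h(x)}\to 0$ then eventually $|u_{n}|_{h(x)}\leq 1$, and $(ii)$ gives $\rho(u_{n})\leq|u_{n}|_{h(x)}^{h^{-}}\to 0$; conversely, if $\rho(u_{n})\to 0$ then eventually $\rho(u_{n})<1$, so $(i)$ forces $|u_{n}|_{h(x)}\leq 1$ and then $(ii)$ yields $|u_{n}|_{h(x)}\leq\rho(u_{n})^{1/h^{+}}\to 0$. The divergence statements are symmetric, using $\rho(u_{n})\geq|u_{n}|_{h(x)}^{h^{-}}$ and $|u_{n}|_{h(x)}\geq\rho(u_{n})^{1/h^{+}}$ once the relevant quantity exceeds $1$. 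The only genuinely delicate step is the identity in the first paragraph; everything afterwards is bookkeeping. Within that step, the subtle point is justifying the domination needed for dominated convergence (finiteness of $\varphi$ on $(0,\infty)$, continuity, and the limit at $+\infty$), which is precisely where $h^{+}<\infty$ — guaranteed by continuity of $h$ on the compact set $\overline{\Omega}$ — is indispensable.
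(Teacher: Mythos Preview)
Your proof is correct and complete. The paper does not actually prove Proposition~2.2; it states it as a preliminary fact and refers the reader to the standard references \cite{Cruz,Diening,Fan,Radulescu} for the proof. Your argument --- first establishing the key identity $\rho\bigl(u/|u|_{h(x)}\bigr)=1$ via the strict monotonicity and continuity of $\lambda\mapsto\rho(u/\lambda)$, and then reading off (i)--(iii) from the pointwise bounds $\lambda_0^{-h^{\pm}}$ on $\lambda_0^{-h(x)}$ --- is exactly the standard route taken in those references (see, e.g., Fan--Zhao or Diening et al.), so there is nothing to contrast.
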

\begin{proposition}\label{Prop:2.3aa} If $u,u_{n}\in L^{h(x)}(\Omega)$, then the following statements are
equivalent:
\begin{itemize}
\item[$(i)$] $\lim\limits_{n\rightarrow \infty}|u_{n}-u|_{h(x)}=0$;
\item[$(ii)$] $\lim\limits_{n\rightarrow \infty }\rho(u_{n}-u)=0$;
\item[$(iii)$] $u_{n}\rightarrow u\mathit{\ }$\textit{in measure in}$\mathit{\ }\Omega \mathit{\ }$\textit{and}
$\mathit{\ }\lim\limits_{n\rightarrow \infty}\rho (u_{n})=\rho(u)$.
\end{itemize}
\end{proposition}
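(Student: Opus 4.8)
The plan is to prove the equivalences by closing the cycle $(i)\Leftrightarrow(ii)\Rightarrow(iii)\Rightarrow(ii)$. The equivalence $(i)\Leftrightarrow(ii)$ is immediate: it is exactly Proposition~\ref{Prop:2.2}$(iii)$ applied to the sequence $u_n-u$ in place of $u_n$, which lies in $L^{h(x)}(\Omega)$ since the space is a vector space. For $(ii)\Rightarrow(iii)$ I would argue in two parts. Convergence in measure follows from a Chebyshev-type estimate: for fixed $\varepsilon>0$ one has $|u_n-u|^{h(x)}\ge\min\{\varepsilon^{h^-},\varepsilon^{h^+}\}$ on the set $\{x:|u_n(x)-u(x)|>\varepsilon\}$, whence
\[
\big|\{x\in\Omega:|u_n(x)-u(x)|>\varepsilon\}\big|\le\frac{\rho(u_n-u)}{\min\{\varepsilon^{h^-},\varepsilon^{h^+}\}}\longrightarrow 0 \quad\text{as } n\to\infty .
\]
The convergence $\rho(u_n)\to\rho(u)$ follows from the elementary inequality (valid because $1<h^-\le h(x)\le h^+<\infty$, with $C_\delta$ depending only on $\delta$ and $h^+$): for every $\delta>0$ and all $a,b\in\mathbb{R}$,
\[
\big||a+b|^{h(x)}-|a|^{h(x)}\big|\le\delta\,|a|^{h(x)}+C_\delta\,|b|^{h(x)},
\]
which one obtains from the mean value theorem together with Young's inequality. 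Taking $a=u$, $b=u_n-u$ and integrating gives $|\rho(u_n)-\rho(u)|\le\delta\,\rho(u)+C_\delta\,\rho(u_n-u)$; letting $n\to\infty$ and then $\delta\to0$ (using $\rho(u)<\infty$) yields $\rho(u_n)\to\rho(u)$.

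The substantive step is $(iii)\Rightarrow(ii)$, a Radon--Riesz/Brezis--Lieb type argument adapted to the variable exponent. Suppose it fails: there are $\varepsilon_0>0$ and a subsequence with $\rho(u_{n_k}-u)\ge\varepsilon_0$ for all $k$. Since $u_{n_k}\to u$ in measure, pass to a further subsequence (not relabelled) with $u_{n_k}\to u$ a.e. in $\Omega$. Then $|u_{n_k}|^{h(x)}\to|u|^{h(x)}$ a.e., while by hypothesis $\int_\Omega|u_{n_k}|^{h(x)}\,dx\to\int_\Omega|u|^{h(x)}\,dx<\infty$; hence Scheff\'{e}'s lemma gives $|u_{n_k}|^{h(x)}\to|u|^{h(x)}$ in $L^1(\Omega)$, and since the weight $x\mapsto 2^{h(x)-1}$ is bounded we obtain $\int_\Omega 2^{h(x)-1}|u_{n_k}|^{h(x)}\,dx\to\int_\Omega 2^{h(x)-1}|u|^{h(x)}\,dx$. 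Now apply Fatou's lemma to the nonnegative functions
\[
g_k(x):=2^{h(x)-1}\big(|u_{n_k}(x)|^{h(x)}+|u(x)|^{h(x)}\big)-|u_{n_k}(x)-u(x)|^{h(x)}\ge 0,
\]
the nonnegativity being the pointwise convexity inequality $|s-t|^{h(x)}\le 2^{h(x)-1}(|s|^{h(x)}+|t|^{h(x)})$ for exponents $\ge 1$; these converge a.e. to $2^{h(x)}|u(x)|^{h(x)}$. Fatou's lemma then yields
\[
\int_\Omega 2^{h(x)}|u|^{h(x)}\,dx\le\int_\Omega 2^{h(x)}|u|^{h(x)}\,dx-\limsup_{k\to\infty}\rho(u_{n_k}-u),
\]
so $\rho(u_{n_k}-u)\to 0$, contradicting $\rho(u_{n_k}-u)\ge\varepsilon_0$. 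This establishes $(iii)\Rightarrow(ii)$ and closes the cycle.

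I expect the main (though modest) obstacle to be precisely the place where the variable exponent forces a departure from the classical $L^p$ proof: the convexity constant $2^{h(x)-1}$ now depends on $x$ and cannot be factored out of the integral, so one cannot read off $\int 2^{h(x)-1}|u_{n_k}|^{h(x)}\to\int 2^{h(x)-1}|u|^{h(x)}$ directly from $\rho(u_{n_k})\to\rho(u)$. This is why the extra step of upgrading ``$\rho(u_n)\to\rho(u)$ together with convergence in measure'' to $L^1(\Omega)$-convergence of $|u_n|^{h(x)}$ via Scheff\'{e}'s lemma is needed before the Fatou/Brezis--Lieb trick can be run. All the remaining manipulations --- the Chebyshev bound, the elementary pointwise inequality, and the Fatou step --- go through by working pointwise in $x$ and using only $1<h^-\le h(x)\le h^+<\infty$.
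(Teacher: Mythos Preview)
Your argument is correct. The cycle $(i)\Leftrightarrow(ii)\Rightarrow(iii)\Rightarrow(ii)$ closes exactly as you describe: the Chebyshev estimate and the Brezis--Lieb type pointwise inequality (with constants uniform in $h(x)\in[h^-,h^+]$) handle $(ii)\Rightarrow(iii)$, and the Scheff\'e--Fatou combination for $(iii)\Rightarrow(ii)$ is a valid route, your observation about upgrading $\rho(u_{n_k})\to\rho(u)$ to $L^1$-convergence of $|u_{n_k}|^{h(\cdot)}$ before running Fatou with the $x$-dependent weight $2^{h(x)-1}$ being precisely the right fix.

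The paper, however, does not prove this proposition at all: it is listed among the preliminary facts on variable exponent Lebesgue spaces, and the reader is referred to the monographs \cite{Cruz,Diening,Fan,Radulescu} for proofs. So there is no in-paper argument to compare against; your write-up supplies a complete self-contained proof where the paper simply cites the literature.
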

The variable exponent Sobolev space $W^{1,h(x)}( \Omega)$ is defined by
\begin{equation*}
W^{1,h(x)}( \Omega) =\{u\in L^{h(x) }(\Omega) : |\nabla u| \in L^{h(x)}(\Omega)\},
\end{equation*}
with the norm
\begin{equation*}
\|u\|_{1,h(x)}=|u|_{h(x)}+|\nabla u|_{h(x)},
\end{equation*}
or equivalently
\begin{equation*}
\|u\|_{1,h(x)}=\inf \left\{ \lambda>0:\int_{\Omega }\left( \left|\frac{\nabla u(x) }{\lambda}\right|^{h(x)}+\left|\frac{u(x)}{\lambda}\right|^{h(x)}\right)dx
\leq 1\right\},
\end{equation*}
for all $u\in W^{1,h(x)}(\Omega)$, where $|\nabla u|_{h(x)}=|\,|\nabla u|\,|_{h(x)}$.\\

The space $W_{0}^{1,h(x)}(\Omega)$ is defined as the closure of $C_{0}^{\infty }(\Omega )$ in $W^{1,h(x)}(\Omega)$.
\begin{proposition}\label{Prop:2.4} If $h\in C_+(\overline{\Omega })$ and $ h^{+}<\infty$, then the spaces $L^{h(x) }( \Omega)$, $W^{1,h(x)}(\Omega)$, and $W_{0}^{1,h(x)}(\Omega)$ are separable and reflexive Banach spaces.
\end{proposition}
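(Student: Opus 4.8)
The plan is to establish the three properties — Banach, separable, reflexive — for $L^{h(x)}(\Omega)$ first, and then transfer them to $W^{1,h(x)}(\Omega)$ by realizing it as a closed subspace of a finite product of copies of $L^{h(x)}(\Omega)$. Completeness of $L^{h(x)}(\Omega)$ is the usual Riesz--Fischer argument: from a Cauchy sequence extract a subsequence with $\|u_{n_{k+1}}-u_{n_k}\|_{h(x)}\le 2^{-k}$, put $g:=|u_{n_1}|+\sum_{k\ge 1}|u_{n_{k+1}}-u_{n_k}|$, use the modular--norm inequalities of Proposition~\ref{Prop:2.2} together with monotone convergence to see $g\in L^{h(x)}(\Omega)$, deduce that the partial sums converge a.e.\ to a function $u$ dominated by $g$, and invoke Proposition~\ref{Prop:2.3aa} to upgrade this to norm convergence $u_{n_k}\to u$; a Cauchy sequence with a convergent subsequence converges.

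For separability, since $h^{+}<\infty$ norm convergence is equivalent to modular convergence (Proposition~\ref{Prop:2.2}$(iii)$), so it suffices to exhibit a countable modular-dense set, for instance the finite $\mathbb{Q}$-linear combinations of characteristic functions of finite unions of balls with rational centres and rational radii. Given $u\in L^{h(x)}(\Omega)$ one truncates, $u_{M}:=\max\{-M,\min\{M,u\}\}$, and since $|u-u_{M}|^{h(x)}\le 2^{h^{+}}|u|^{h(x)}\in L^{1}(\Omega)$ with $u_{M}\to u$ a.e., dominated convergence gives $\rho(u-u_{M})\to 0$; then one approximates the bounded measurable function $u_{M}$ uniformly by a simple function, whose modular distance to $u_{M}$ is small because $\Omega$ is bounded, and finally approximates the measurable sets occurring in that simple function by members of the countable family, using inner/outer regularity of Lebesgue measure and $\rho(\chi_{E}-\chi_{F})=|E\triangle F|$. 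Combining the three steps shows the countable set is modular-dense, hence norm-dense.

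For reflexivity I would argue by duality. The conjugate exponent $h'(\cdot)=h(\cdot)/(h(\cdot)-1)$ again lies in $C_{+}(\overline{\Omega})$ with $(h')^{-}>1$ and $(h')^{+}<\infty$. H\"older's inequality (Proposition~\ref{Prop:2.1}) makes $J_{h}:L^{h'(x)}(\Omega)\to (L^{h(x)}(\Omega))^{*}$, $v\mapsto(u\mapsto\int_{\Omega}uv\,dx)$, bounded and injective, and it is onto: for $\Phi\in(L^{h(x)}(\Omega))^{*}$ the set function $E\mapsto\Phi(\chi_{E})$ is a signed measure, absolutely continuous with respect to Lebesgue measure because $\|\chi_{E}\|_{h(x)}\to 0$ as $|E|\to 0$ (Proposition~\ref{Prop:2.2}$(iii)$), so Radon--Nikodym yields $v$ with $\Phi(\chi_{E})=\int_{E}v\,dx$; extending to simple functions and using the density from the previous paragraph gives $\Phi=J_{h}v$, together with a bound $\|v\|_{h'(x)}\le C\|\Phi\|$. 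Hence $(L^{h(x)}(\Omega))^{*}\cong L^{h'(x)}(\Omega)$, and symmetrically $(L^{h'(x)}(\Omega))^{*}\cong L^{h(x)}(\Omega)$; tracing the identifications shows the canonical embedding of $L^{h(x)}(\Omega)$ into its bidual is onto, so $L^{h(x)}(\Omega)$ is reflexive. (A Clarkson-type inequality plus Milman--Pettis is an alternative, but the duality route reuses tools already in hand.)

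Finally, the map $T:W^{1,h(x)}(\Omega)\to (L^{h(x)}(\Omega))^{N+1}$, $Tu=(u,\partial_{1}u,\dots,\partial_{N}u)$, is a linear isomorphism onto its image (the relevant norms are equivalent to $\|\cdot\|_{1,h(x)}$), and that image is closed because $L^{h(x)}(\Omega)\hookrightarrow L^{1}_{\mathrm{loc}}(\Omega)$ allows passage to the limit in the definition of the weak derivatives. A finite product of separable reflexive Banach spaces is a separable reflexive Banach space, and so is any closed subspace, so pulling back through $T$ shows $W^{1,h(x)}(\Omega)$ is a separable reflexive Banach space (completeness included, since it is isomorphic to a closed subspace of a complete space). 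The one genuinely non-routine point is the surjectivity of $J_{h}$ in the reflexivity step — equivalently, a uniform-convexity estimate if one prefers Milman--Pettis — namely controlling the Radon--Nikodym density and showing it lies in $L^{h'(x)}(\Omega)$ with the expected norm bound; everything else is bookkeeping with Propositions~\ref{Prop:2.1}--\ref{Prop:2.3aa}.
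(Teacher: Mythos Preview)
Your sketch is correct and follows the standard route found in the references the paper cites (Diening et al., Fan--Zhao, Cruz-Uribe--Fiorenza, R\u{a}dulescu--Repov\v{s}). Note, however, that the paper itself does not prove Proposition~\ref{Prop:2.4}: it is stated as a known preliminary fact, with the proof deferred entirely to those references. So there is no ``paper's approach'' to compare against beyond the observation that your outline is precisely the argument one finds in those sources --- Riesz--Fischer for completeness, countable simple-function approximation (via modular convergence and Proposition~\ref{Prop:2.2}(iii)) for separability, the duality $(L^{h(\cdot)})^{*}\cong L^{h'(\cdot)}$ for reflexivity, and the closed-subspace-of-a-product trick for $W^{1,h(x)}(\Omega)$.

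The one point you rightly flag --- showing the Radon--Nikodym density $v$ actually lies in $L^{h'(x)}(\Omega)$ with $\|v\|_{h'(x)}\le C\|\Phi\|$ --- is indeed where the work sits; the usual argument tests $\Phi$ against truncations $s_{n}=|v|^{h'(x)-1}\operatorname{sgn}(v)\chi_{\{|v|\le n\}}$ and uses Proposition~\ref{Prop:2.2} to bound $\rho_{h'}(v\chi_{\{|v|\le n\}})$ uniformly in $n$. Your remark that Clarkson-type inequalities plus Milman--Pettis give an alternative is also accurate, and in fact that is the route some of the cited texts prefer, since it yields uniform convexity (hence reflexivity) directly without first identifying the dual.
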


Furthermore,  Poincar\'{e} inequality holds in $W_{0}^{1,h(x)}(\Omega)$ \cite{Fan}; that is, there exists a
positive constant $c$ independent of $u$ such that
\begin{equation*}
|u|_{h(x)}\leq c|\nabla u|_{h(x)},\quad \forall u\in W_{0}^{1,h(x)}(\Omega),
\end{equation*}
which implies that $|\nabla u|_{h(x)}$ is an equivalent norm in $W_{0}^{1,h(x)}(\Omega)$. Therefore, on $W_{0}^{1,h(x)}(\Omega)$ we can define an equivalent norm $\|\cdot\|_{h(x)}$ such that
\begin{equation*}
\|u\|_{h(x)} =|\nabla u|_{h(x)}.
\end{equation*}

\begin{proposition}\label{Prop:2.5} If $h\in C_+(\overline{\Omega })$, $r\in C(\overline{\Omega })$ and  $1\leq r(x)<h^{\ast }(x)$ for all $x\in
\overline{\Omega }$, then the embeddings $W^{1,h(x)}(\Omega) \hookrightarrow L^{r(x) }(\Omega)$ and $W_0^{1,h(x)}(\Omega) \hookrightarrow L^{r(x) }(\Omega)$  are compact and continuous, where

$h^{\ast }( x) =\left\{\begin{array}{cc}
\frac{Nh(x) }{N-h( x) } & \text{if }h(x)<N, \\
+\infty & \text{if }h( x) \geq N.
\end{array}
\right. $
\end{proposition}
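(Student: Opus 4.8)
The plan is to reduce the claimed embeddings to the classical constant-exponent Sobolev and Rellich--Kondrachov theorems by a localization argument. The two essential inputs are the uniform continuity of $h$ and $r$ on the compact set $\overline{\Omega}$ and the \emph{strict} inequality $r(x)<h^{\ast}(x)$; it is precisely the strictness that lets us avoid the delicate critical embedding $W^{1,h(x)}(\Omega)\hookrightarrow L^{h^{\ast}(x)}(\Omega)$ together with the log-H\"older type regularity of $h$ that it would require. As a first reduction, extend $h$ and $r$ continuously to $\mathbb{R}^{N}$ (keeping $h>1$, $r\geq 1$) and fix an open ball $B\supset\overline{\Omega}$: for $u\in W_{0}^{1,h(x)}(\Omega)$ extend $u$ by zero to $B$, and for $u\in W^{1,h(x)}(\Omega)$ apply a bounded linear extension operator $W^{1,h(x)}(\Omega)\to W^{1,h(x)}(B)$, which the Lipschitz regularity of $\partial\Omega$ provides. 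Hence it suffices to control the inclusion into $L^{r(x)}(\Omega)$ of functions $u$ on $B$ carrying a bound on $\int_{B}(|u|^{h(x)}+|\nabla u|^{h(x)})\,dx$.

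For a small parameter $\varepsilon>0$ to be fixed, cover $\overline{\Omega}$ by finitely many balls $B_{1},\dots,B_{m}$ on each of which $\operatorname{osc}_{B_{i}}h<\varepsilon$ and $\operatorname{osc}_{B_{i}}r<\varepsilon$, and set the \emph{constants} $h_{i}^{-}:=\inf_{B_{i}}h-\varepsilon$ (raised slightly if needed to keep it above $1$) and $r_{i}^{+}:=\sup_{B_{i}}r+\varepsilon$. The crux of the proof --- and the step I expect to be the main obstacle --- is to choose $\varepsilon$ so that the constant-exponent subcriticality $r_{i}^{+}<(h_{i}^{-})^{\ast}$ holds \emph{simultaneously} for every $i$. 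Since $h^{\ast}$ is lower semicontinuous on $\overline{\Omega}$ (taking the value $+\infty$ on $\{h\geq N\}$) while $r$ is continuous, the strict inequality $h^{\ast}-r>0$ together with compactness of $\overline{\Omega}$ yields a fixed gap $c>0$ with $h^{\ast}(x)\geq r(x)+c$ at every point where $h(x)<N$. One then checks: on the balls $B_{i}$ that meet $\{h\geq N-\varepsilon\}$ the value $(h_{i}^{-})^{\ast}$ is $+\infty$ or arbitrarily large while $r_{i}^{+}\leq\sup_{\overline{\Omega}}r+1$; on the remaining balls, uniform continuity of the map $t\mapsto t^{\ast}$ away from $N$ keeps $(h_{i}^{-})^{\ast}$ within $c$ of $h^{\ast}$ once $\varepsilon$ is small. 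So $\varepsilon$ small enough does the job, uniformly in $i$.

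With $\varepsilon$ fixed in this way, the remaining steps are classical. On each $B_{i}$ the standard monotonicity inclusion on a finite-measure set places the restriction of $u$ in the constant-exponent space $W^{1,h_{i}^{-}}(B_{i})$ with $\|u\|_{W^{1,h_{i}^{-}}(B_{i})}\leq C\,\|u\|_{W^{1,h(x)}(B)}$; the classical Rellich--Kondrachov theorem on the ball gives the compact (hence continuous) embedding $W^{1,h_{i}^{-}}(B_{i})\hookrightarrow\hookrightarrow L^{r_{i}^{+}}(B_{i})$; and $r\leq r_{i}^{+}$ on $B_{i}$ with $|B_{i}|<\infty$ gives the continuous embedding $L^{r_{i}^{+}}(B_{i})\hookrightarrow L^{r(x)}(B_{i})$. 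Chaining these as $|u|_{L^{r(x)}(B_{i})}\leq C\,|u|_{L^{r_{i}^{+}}(B_{i})}\leq C\,\|u\|_{W^{1,h_{i}^{-}}(B_{i})}\leq C\,\|u\|_{1,h(x)}$, summing over $i=1,\dots,m$, and passing between modulars and norms for $L^{r(x)}$ via Proposition~\ref{Prop:2.2} proves continuity of the embedding. For compactness, take $\{u_{n}\}$ bounded in $W^{1,h(x)}(\Omega)$; after extension it is bounded in each $W^{1,h_{i}^{-}}(B_{i})$, so by successive extraction over the finitely many indices one obtains a subsequence converging in every $L^{r_{i}^{+}}(B_{i})$, hence in every $L^{r(x)}(B_{i})$, to a common limit $u$ (the local limits being compatible as limits in measure). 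Then $\int_{\Omega}|u_{n}-u|^{r(x)}\,dx\leq\sum_{i=1}^{m}\int_{B_{i}}|u_{n}-u|^{r(x)}\,dx\to 0$, and Proposition~\ref{Prop:2.3aa} converts this modular convergence into norm convergence $u_{n}\to u$ in $L^{r(x)}(\Omega)$. The same argument applies verbatim to $W_{0}^{1,h(x)}(\Omega)$, using the zero extension in place of the extension operator.
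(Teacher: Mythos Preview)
The paper does not prove Proposition~\ref{Prop:2.5}; it is listed among the preliminary facts for which the reader is referred to the standard monographs \cite{Cruz,Diening,Fan,Radulescu}. Your argument is precisely the classical localization proof found in those references (notably Fan--Zhao and Diening et al.): cover $\overline{\Omega}$ by small balls on which the continuous exponents are nearly constant, invoke the constant-exponent Rellich--Kondrachov theorem on each ball, and patch together via the modular. The outline is correct.

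Two points worth tightening. First, subtracting $\varepsilon$ in $h_i^-:=\inf_{B_i}h-\varepsilon$ is unnecessary and mildly awkward; taking $h_i^-=\inf_{B_i}h$ already gives $L^{h(x)}(B_i)\hookrightarrow L^{h_i^-}(B_i)$ on finite measure, and the gap needed for $r_i^+<(h_i^-)^*$ comes from the choice of ball radius alone. Second, the bounded linear extension $W^{1,h(x)}(\Omega)\to W^{1,h(x)}(B)$ for Lipschitz $\partial\Omega$ is itself a nontrivial result in the variable-exponent setting (see \cite[Section~8.5]{Diening}); you are correct that it holds, but it deserves an explicit citation rather than a parenthetical. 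Alternatively, the extension can be avoided entirely by multiplying against a smooth partition of unity subordinate to the cover and applying Rellich--Kondrachov to each $\chi_i u\in W_0^{1,h_i^-}(B_i)$, which is the route some of the references take.
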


\medskip
In the sequel, we introduce the double phase operator, the Musielak–Orlicz space, and the Musielak–Orlicz Sobolev space, respectively.\\
\medskip

Throughout the paper, we assume the following.
\begin{itemize}
\item[$(H_1)$] $p,q \in C_+(\overline{\Omega})$, $1<p(x)<N$ and $p(x)<q(x)$ for all $x\in \overline{\Omega }$ with \newline $q^+<\min\{p^*(x),N\}$.
\item[$(H_2)$] $\mu\in L^\infty(\Omega)$ such that $\mu(\cdot)\geq 0$, and $\mu(\cdot)\not\equiv 0$.
\end{itemize}

Let $\mathcal{H}:\Omega\times [0,\infty]\to [0,\infty]$ be the nonlinear function defined by
\[
\mathcal{H}(x,t)=t^{p(x)}+\mu(x)t^{q(x)}\ \text{for all}\ (x,t)\in \Omega\times [0,\infty).
\]
Then the corresponding modular $\rho_\mathcal{H}(\cdot)$ is given by
\[
\displaystyle\rho_\mathcal{H}(u)=\int_\Omega\mathcal{H}(x,|u|)dx=
\int_\Omega\left(|u|^{p(x)}+\mu(x)|u|^{q(x)}\right)dx.
\]
The \textit{Musielak-Orlicz space} $L^{\mathcal{H}}(\Omega)$, is defined by
\[
L^{\mathcal{H}}(\Omega)=\left\{u:\Omega\to \mathbb{R}\,\, \text{measurable};\,\, \rho_{\mathcal{H}}(u)<+\infty\right\},
\]
endowed with the Luxemburg norm
\[
\|u\|_{\mathcal{H}}=\inf\left\{\hat{\lambda}>0: \rho_{\mathcal{H}}\left(\frac{u}{\hat{\lambda}}\right)\leq 1\right\}.
\]
Analogous to Proposition \ref{Prop:2.2}, there are similar relationship between the modular $\rho_{\mathcal{H}}(\cdot)$ and the norm $\|\cdot\|_{\mathcal{H}}$, see \cite[Proposition 2.13]{Crespo-Blanco} for a detailed proof.

\begin{proposition}\label{Prop:2.2a}
Assume $(H_1)$ hold, $u\in L^{\mathcal{H}}(\Omega)$ and $\hat{\lambda}\in \mathbb{R}$. Then
\begin{itemize}
\item[$(i)$] If $u\neq 0$, then $\|u\|_{\mathcal{H}}=\hat{\lambda}\Leftrightarrow \rho_{\mathcal{H}}(\frac{u}{\hat{\lambda}})=1$,
\item[$(ii)$] $\|u\|_{\mathcal{H}}<1\ (\text{resp.}\ >1, =1)\Leftrightarrow \rho_{\mathcal{H}}(\frac{u}{\hat{\lambda}})<1\ (\text{resp.}\ >1, =1)$,
\item[$(iii)$] If $\|u\|_{\mathcal{H}}<1\Rightarrow \|u\|_{\mathcal{H}}^{q^+}\leq \rho_{\mathcal{H}}(u)\leq \|u\|_{\mathcal{H}}^{p^-}$,
\item[$(iv)$]If $\|u\|_{\mathcal{H}}>1\Rightarrow \|u\|_{\mathcal{H}}^{p^-}\leq \rho_{\mathcal{H}}(u)\leq \|u\|_{\mathcal{H}}^{q^+}$,
\item[$(v)$] $\|u\|_{\mathcal{H}}\to 0\Leftrightarrow \rho_{\mathcal{H}}(u)\to 0$,
\item[$(vi)$]$\|u\|_{\mathcal{H}}\to +\infty\Leftrightarrow \rho_{\mathcal{H}}(u)\to +\infty$,
\item[$(vii)$] $\|u\|_{\mathcal{H}}\to 1\Leftrightarrow \rho_{\mathcal{H}}(u)\to 1$,
\item[$(viii)$] If $u_n\to u$ in $L^{\mathcal{H}}(\Omega)$, then $\rho_{\mathcal{H}}(u_n)\to\rho_{\mathcal{H}}(u)$.
\end{itemize}
\end{proposition}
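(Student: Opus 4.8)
The plan is to derive all eight items from one elementary fact: since $1<p^-\le p(x)\le q(x)\le q^+<\infty$ on $\overline\Omega$ by $(H_1)$, the function $\mathcal H$ satisfies the scaling estimates, for $(x,t)\in\Omega\times[0,\infty)$,
\[
\lambda^{p^-}\mathcal H(x,t)\le\mathcal H(x,\lambda t)\le\lambda^{q^+}\mathcal H(x,t)\ \ (\lambda\ge1),\qquad \lambda^{q^+}\mathcal H(x,t)\le\mathcal H(x,\lambda t)\le\lambda^{p^-}\mathcal H(x,t)\ \ (0<\lambda\le1),
\]
obtained termwise from the monotonicity of $s\mapsto\lambda^{s}$. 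Integrating gives the same inequalities for $\rho_{\mathcal H}$; in particular $\rho_{\mathcal H}(\lambda u)<\infty$ for all $\lambda>0$ whenever $u\in L^{\mathcal H}(\Omega)$, so for fixed $u$ the map $g_u(\lambda):=\rho_{\mathcal H}(u/\lambda)$ is finite on $(0,\infty)$. Using the estimates again I would verify that $g_u$ is continuous (dominated convergence, with $\mathcal H(x,|u|/\lambda_n)$ dominated by a fixed multiple of $\mathcal H(x,|u|)\in L^1(\Omega)$) and, when $u\neq0$, strictly decreasing with $g_u(\lambda)\to+\infty$ as $\lambda\to0^+$ and $g_u(\lambda)\to0$ as $\lambda\to+\infty$ (since $\rho_{\mathcal H}(u)>0$).

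Given this, (i) follows from the intermediate value theorem: for $u\neq0$ there is a unique $\lambda_0>0$ with $g_u(\lambda_0)=1$, and strict monotonicity gives $\{\lambda>0:\rho_{\mathcal H}(u/\lambda)\le1\}=[\lambda_0,+\infty)$, whence $\|u\|_{\mathcal H}=\lambda_0$. Then (ii) follows by comparing $g_u(1)=\rho_{\mathcal H}(u)$ with $g_u(\|u\|_{\mathcal H})=1$ and invoking monotonicity of $g_u$ (the case $u=0$ is trivial). For (iii)--(iv), assuming $u\neq0$ I would write $u=\|u\|_{\mathcal H}\,w$ with $w:=u/\|u\|_{\mathcal H}$, note $\rho_{\mathcal H}(w)=1$ by (i), and apply the scaling estimates to $\rho_{\mathcal H}(\|u\|_{\mathcal H}w)$: if $\|u\|_{\mathcal H}<1$ this yields $\|u\|_{\mathcal H}^{q^+}\le\rho_{\mathcal H}(u)\le\|u\|_{\mathcal H}^{p^-}$, and if $\|u\|_{\mathcal H}>1$ it yields $\|u\|_{\mathcal H}^{p^-}\le\rho_{\mathcal H}(u)\le\|u\|_{\mathcal H}^{q^+}$.

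Items (v)--(vii) are then read off from (ii)--(iv) applied to a sequence $(u_n)$: once $\|u_n\|_{\mathcal H}<1$ (resp. $>1$) the quantity $\rho_{\mathcal H}(u_n)$ is squeezed between powers of $\|u_n\|_{\mathcal H}$ by (iii) (resp. (iv)), and once $\rho_{\mathcal H}(u_n)<1$ (resp. $>1$) one has $\|u_n\|_{\mathcal H}<1$ (resp. $>1$) by (ii) and the squeeze can be inverted; for (vii) one treats the subsequences with $\|u_n\|_{\mathcal H}\le1$ and $\|u_n\|_{\mathcal H}>1$ separately, the two-sided bounds forcing $\rho_{\mathcal H}(u_n)\to1$ in each, and symmetrically for the converse. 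The remaining item (viii) uses convexity of $t\mapsto t^{p(x)}$ and $t\mapsto t^{q(x)}$: for $\varepsilon\in(0,1)$ there is $K(\varepsilon)=K(\varepsilon,p^-,q^+)$ with $\mathcal H(x,a+b)\le(1+\varepsilon)^{q^+-1}\mathcal H(x,a)+K(\varepsilon)\mathcal H(x,b)$ for all $a,b\ge0$ (split $a+b=\tfrac1{1+\varepsilon}\,(1+\varepsilon)a+\tfrac\varepsilon{1+\varepsilon}\cdot\tfrac{1+\varepsilon}{\varepsilon}\,b$ inside each power and bound the exponents by $q^+$). Applying this to $u_n=u+(u_n-u)$ and to $u=u_n+(u-u_n)$, integrating, using $\|u_n-u\|_{\mathcal H}\to0\Rightarrow\rho_{\mathcal H}(u_n-u)\to0$ from (v), and letting $\varepsilon\to0$ gives $\limsup_n\rho_{\mathcal H}(u_n)\le\rho_{\mathcal H}(u)\le\liminf_n\rho_{\mathcal H}(u_n)$.

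The substance of the argument is concentrated in two places: the monotonicity and continuity of $g_u$ underlying (i) (after which (ii)--(vii) are bookkeeping off the scaling estimates), and the convexity estimate behind (viii), where the delicate point is that the constant $K(\varepsilon)$ must be chosen independently of $x$ --- this is precisely where the boundedness of the exponents in $(H_1)$ enters. Since $p,q\in C(\overline\Omega)$ are bounded, $\mathcal H$ satisfies a $\Delta_2$-condition and the whole statement is in fact a special case of the general Musielak--Orlicz theory, so one may alternatively just cite \cite[Proposition 2.13]{Crespo-Blanco}.
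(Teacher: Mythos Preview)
Your proof is correct and self-contained, but note that the paper does not actually prove this proposition: it simply refers the reader to \cite[Proposition 2.13]{Crespo-Blanco} for the details, treating the result as known background. In that sense there is no ``paper's own proof'' to compare against; your closing remark --- that the statement is a special case of the general Musielak--Orlicz theory and one may just cite \cite{Crespo-Blanco} --- is precisely the route the paper takes. What you gain by writing out the argument is transparency about exactly where $(H_1)$ enters (the uniform bounds $p^-\le p(x)\le q(x)\le q^+$ drive the scaling estimates and give the $x$-independent constant in the convexity bound for (viii)); what the paper gains by citing is brevity, since these norm--modular relations are entirely standard once the $\Delta_2$-condition is in place.
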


\medskip
\noindent
The \textit{Musielak-Orlicz Sobolev space} $W^{1,\mathcal{H}}(\Omega)$ is defined by
\[
W^{1,\mathcal{H}}(\Omega)=\left\{u\in L^{\mathcal{H}}(\Omega):
|\nabla u|\in L^{\mathcal{H}}(\Omega)\right\},
\]
and equipped with the norm
\[
\|u\|_{1,\mathcal{H}}=\|\nabla u\|_{\mathcal{H}}+\|u\|_{\mathcal{H}},
\]
where $\|\nabla u\|_{\mathcal{H}}=\|\,|\nabla u|\,\|_{\mathcal{H}}$.\\

\medskip
\noindent
The space $W_0^{1,\mathcal{H}}(\Omega)$ is defined as the closure of $C_{0}^{\infty }(\Omega )$ in $W^{1,\mathcal{H})}(\Omega)$.
Note also that $L^{\mathcal{H}}(\Omega), W^{1,\mathcal{H}}(\Omega)$ and $W_0^{1,\mathcal{H}}(\Omega)$ are reflexive Banach spaces \cite[Proposition 2.12]{Crespo-Blanco}.\\

\medskip
\noindent
We now present the following embedding relations given in \cite[Proposition 2.16]{Crespo-Blanco}.
\begin{proposition}\label{Prop:2.7a}
Assume that $(H_1)$ and $(H_2)$ hold. Then the following embeddings hold:
\begin{itemize}
\item[$(i)$] $L^{\mathcal{H}}(\Omega)\hookrightarrow L^{h(x)}(\Omega), W^{1,\mathcal{H}}(\Omega)\hookrightarrow W^{1,h(x)}(\Omega)$, $W_0^{1,\mathcal{H}}(\Omega)\hookrightarrow W_0^{1,h(x)}(\Omega)$ are continuous for $h\in C(\overline{\Omega})$ with $1\leq h(x)\leq p(x)$ for all $x\in \overline{\Omega}$.
\item[$(ii)$] $W^{1,\mathcal{H}}(\Omega)\hookrightarrow L^{h(x)}(\Omega)$ and $W_0^{1,\mathcal{H}}(\Omega)\hookrightarrow L^{h(x)}(\Omega)$ are compact for $h\in C(\overline{\Omega})$ with $1\leq h(x)< p^*(x)$ for all $x\in \overline{\Omega}$.
\end{itemize}
\end{proposition}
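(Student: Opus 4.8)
The plan is to compare the Musielak--Orlicz data with the ``dominant'' variable exponent $p(\cdot)$ and then import the variable-exponent Sobolev theory already recorded in Proposition~\ref{Prop:2.5}. The key first step is the pointwise inequality $\mathcal{H}(x,t)=t^{p(x)}+\mu(x)t^{q(x)}\ge t^{p(x)}$, valid precisely because $\mu\ge 0$ by $(H_2)$; hence $\rho_{\mathcal H}(v)\ge\int_\Omega|v|^{p(x)}\,dx$ for every measurable $v$. Arguing through the defining infima of the two Luxemburg norms — if $\rho_{\mathcal H}(v/\hat\lambda)\le1$ then $\int_\Omega|v/\hat\lambda|^{p(x)}\,dx\le1$, so $\hat\lambda$ is admissible for $|v|_{p(x)}$ — one obtains $|v|_{p(x)}\le\|v\|_{\mathcal H}$, that is, the continuous embedding $L^{\mathcal H}(\Omega)\hookrightarrow L^{p(x)}(\Omega)$.

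For part $(i)$, given $h\in C(\overline\Omega)$ with $1\le h(x)\le p(x)$ on $\overline\Omega$, I would add the standard inclusion $L^{p(x)}(\Omega)\hookrightarrow L^{h(x)}(\Omega)$, which holds on the bounded domain $\Omega$: since $t^{h(x)}\le 1+t^{p(x)}$ for all $t\ge0$, we have $\int_\Omega|v|^{h(x)}\,dx\le|\Omega|+\int_\Omega|v|^{p(x)}\,dx$, and Proposition~\ref{Prop:2.2} converts this into $|v|_{h(x)}\le c\,|v|_{p(x)}$ (one may also simply cite \cite{Diening,Fan}). Chaining the two inclusions gives $L^{\mathcal H}(\Omega)\hookrightarrow L^{h(x)}(\Omega)$, and applying the resulting norm bound $|v|_{h(x)}\le C\|v\|_{\mathcal H}$ both to $v=u$ and to $v=|\nabla u|$ converts control of $\|u\|_{1,\mathcal H}=\|\nabla u\|_{\mathcal H}+\|u\|_{\mathcal H}$ into control of $\|u\|_{1,h(x)}=|\nabla u|_{h(x)}+|u|_{h(x)}$, i.e.\ $W^{1,\mathcal H}(\Omega)\hookrightarrow W^{1,h(x)}(\Omega)$. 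Because a continuous linear map sends the closure of $C_0^\infty(\Omega)$ in its domain into the closure of $C_0^\infty(\Omega)$ in its target, this same map restricts to $W_0^{1,\mathcal H}(\Omega)\hookrightarrow W_0^{1,h(x)}(\Omega)$, which completes $(i)$.

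For part $(ii)$, I would specialise $(i)$ to $h=p$ (legitimate since $1<p(x)<N$ by $(H_1)$), obtaining the continuous embeddings $W^{1,\mathcal H}(\Omega)\hookrightarrow W^{1,p(x)}(\Omega)$ and $W_0^{1,\mathcal H}(\Omega)\hookrightarrow W_0^{1,p(x)}(\Omega)$. Then Proposition~\ref{Prop:2.5}, read with base exponent $p(\cdot)$ and target exponent $h(\cdot)$, yields that $W^{1,p(x)}(\Omega)\hookrightarrow L^{h(x)}(\Omega)$ and $W_0^{1,p(x)}(\Omega)\hookrightarrow L^{h(x)}(\Omega)$ are compact whenever $1\le h(x)<p^*(x)$ on $\overline\Omega$ — here continuity of $p^*-h$ on the compact set $\overline\Omega$ upgrades the pointwise strict inequality to a uniform gap, which is what the compactness statement requires. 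Since the composition of a compact operator with a bounded one is compact, the inclusions $W^{1,\mathcal H}(\Omega)\hookrightarrow W^{1,p(x)}(\Omega)\hookrightarrow L^{h(x)}(\Omega)$ and $W_0^{1,\mathcal H}(\Omega)\hookrightarrow W_0^{1,p(x)}(\Omega)\hookrightarrow L^{h(x)}(\Omega)$ are compact, which is $(ii)$. I do not expect a genuine obstacle here — the whole argument is bookkeeping with modulars and Luxemburg norms — and the only step that needs a little care is the domination step, where $(H_2)$ (namely $\mu\ge0$) is essential and one must argue through the infima defining the norms rather than comparing norms term by term, together with checking that the exponent conditions line up so that Proposition~\ref{Prop:2.5} may be invoked with $p(\cdot)$ as the base exponent.
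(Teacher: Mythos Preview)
The paper does not supply its own proof of this proposition; it is quoted verbatim from \cite[Proposition~2.16]{Crespo-Blanco} and used as a black box. Your argument is correct and is in fact the standard route to this result: the domination $\mathcal{H}(x,t)\ge t^{p(x)}$ coming from $\mu\ge0$ gives $L^{\mathcal H}(\Omega)\hookrightarrow L^{p(x)}(\Omega)$ at the Luxemburg-norm level, this lifts componentwise to $W^{1,\mathcal H}(\Omega)\hookrightarrow W^{1,p(x)}(\Omega)$ (and to the $W_0$ spaces by continuity on closures), and composing with Proposition~\ref{Prop:2.5} yields the compact embeddings in~$(ii)$.
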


\begin{proposition}\label{Prop:2.7c}\cite{Crespo-Blanco}
Assume that $(H_1)$ and $(H_2)$ hold. Then the following hold:
\begin{itemize}
\item[$(i)$] The embedding $W^{1,\mathcal{H}}(\Omega)\hookrightarrow L^{\mathcal{H}}(\Omega)$ is compact;
\item[$(ii)$] There exists a constant $c>0$ independent of $u$ such that
\[
\|u\|_{\mathcal{H}}\leq c\|\nabla u\|_{\mathcal{H}}, \quad \forall u \in W_0^{1,\mathcal{H}}(\Omega).
\]
\end{itemize}
\end{proposition}

\medskip
\noindent
As a conclusion of Proposition \ref{Prop:2.7c}, the space $W_0^{1,\mathcal{H}}(\Omega)$ can be equipped with an equivalent norm $\|\cdot\|_{1,\mathcal{H},0}$ given by
\[
\|u\|_{1,\mathcal{H},0}=\|\nabla u\|_{\mathcal{H}}, \quad \forall u \in W_0^{1,\mathcal{H}}(\Omega).
\]

\begin{proposition}\label{Prop:2.7}
The convex functional
$$
\varrho_{\mathcal{H}}(u):=\int_\Omega\left(\frac{|\nabla u|^{p(x)}}{p(x)}+\mu(x)\frac{|\nabla u|^{q(x)}}{q(x)}\right)dx
$$
is of class $ C^{1}(W_0^{1,\mathcal{H}}(\Omega), \mathbb{R})$, and its derivative $\varrho^{\prime}_{\mathcal{H}}$ satisfies the ($S_+$)-property \cite{Zeidler} with the derivative
$$
\langle\varrho^{\prime}_{\mathcal{H}}(u),\varphi\rangle=\int_{\Omega}(|\nabla u|^{p(x)-2}\nabla u+\mu(x)|\nabla u|^{q(x)-2}\nabla u)\cdot\nabla \varphi dx,
$$
for all  $u, \varphi \in W_0^{1,\mathcal{H}}(\Omega)$, where $\langle \cdot, \cdot\rangle$ is the dual pairing between $W_0^{1,\mathcal{H}}(\Omega)$ and its dual $W_0^{1,\mathcal{H}}(\Omega)^{*}$ \cite{Crespo-Blanco}.
\end{proposition}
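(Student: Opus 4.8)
The plan is to prove the two assertions of Proposition~\ref{Prop:2.7} separately, and in both parts to keep the weight $\mu$ attached to the gradient throughout, since for $u\in W_0^{1,\mathcal H}(\Omega)$ it is only $\mu^{1/q(x)}\nabla u$, and not $\nabla u$ itself, that is guaranteed to belong to $L^{q(x)}(\Omega)$.

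For the $C^1$-regularity I would first verify G\^ateaux differentiability. Fix $u,\varphi\in W_0^{1,\mathcal H}(\Omega)$; for a.e.\ $x$ the map $t\mapsto \tfrac1{p(x)}|\nabla u+t\nabla\varphi|^{p(x)}+\tfrac{\mu(x)}{q(x)}|\nabla u+t\nabla\varphi|^{q(x)}$ is $C^1$ in $t$, and for $|t|\le 1$ its derivative is dominated, uniformly in $t$, by $C\bigl(|\nabla u|^{p(x)-1}+|\nabla\varphi|^{p(x)-1}\bigr)|\nabla\varphi|+C\mu(x)\bigl(|\nabla u|^{q(x)-1}+|\nabla\varphi|^{q(x)-1}\bigr)|\nabla\varphi|$. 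This majorant lies in $L^1(\Omega)$ by the variable-exponent H\"older inequality (Proposition~\ref{Prop:2.1}), using the conjugate pair $L^{p'(x)}$--$L^{p(x)}$ for the first group and, for the weighted group, the functions $\mu^{1/q'(x)}|\nabla u|^{q(x)-1}\in L^{q'(x)}(\Omega)$ and $\mu^{1/q(x)}|\nabla\varphi|\in L^{q(x)}(\Omega)$ (whose moduli are controlled by $\rho_{\mathcal H}(\nabla u)$ and $\rho_{\mathcal H}(\nabla\varphi)$). Differentiating under the integral sign (mean value theorem plus dominated convergence) then yields the stated formula for $\langle\varrho'_{\mathcal H}(u),\varphi\rangle$, and the same H\"older bound shows $\varrho'_{\mathcal H}(u)\in W_0^{1,\mathcal H}(\Omega)^{*}$. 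To upgrade to Fr\'echet differentiability, hence to $C^1$, it suffices to prove that $u\mapsto\varrho'_{\mathcal H}(u)$ is continuous into the dual: if $u_n\to u$ in $W_0^{1,\mathcal H}(\Omega)$, then $\nabla u_n\to\nabla u$ in $L^{\mathcal H}(\Omega)$, hence in $L^{p(x)}(\Omega)$ by Proposition~\ref{Prop:2.7a}(i), and $\int_\Omega\mu|\nabla u_n-\nabla u|^{q(x)}\,dx\to0$. Testing against $\varphi$ with $\|\varphi\|_{1,\mathcal H,0}\le1$ and applying H\"older reduces the claim to the vanishing of $\bigl|\,|\nabla u_n|^{p(x)-2}\nabla u_n-|\nabla u|^{p(x)-2}\nabla u\,\bigr|_{p'(x)}$ and of $\bigl|\,\mu^{1/q'(x)}\bigl(|\nabla u_n|^{q(x)-2}\nabla u_n-|\nabla u|^{q(x)-2}\nabla u\bigr)\,\bigr|_{q'(x)}$, each of which follows from continuity of the corresponding Nemytskii operator, verified by extracting a subsequence converging a.e., building an $L^1$-majorant via Young's inequality, and invoking dominated convergence together with Proposition~\ref{Prop:2.2}(iii).

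For the $(S_+)$-property, let $u_n\rightharpoonup u$ in $W_0^{1,\mathcal H}(\Omega)$ with $\limsup_n\langle\varrho'_{\mathcal H}(u_n),u_n-u\rangle\le0$. Since $\xi\mapsto|\xi|^{r-2}\xi$ is monotone for every $r>1$ and $\mu\ge0$, $\varrho'_{\mathcal H}$ is monotone, so $\langle\varrho'_{\mathcal H}(u_n)-\varrho'_{\mathcal H}(u),u_n-u\rangle\ge0$; combined with $\langle\varrho'_{\mathcal H}(u),u_n-u\rangle\to0$ (weak convergence) this forces $\langle\varrho'_{\mathcal H}(u_n)-\varrho'_{\mathcal H}(u),u_n-u\rangle\to0$. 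This quantity is the sum of the two nonnegative integrals
\begin{align*}
P_n&=\int_\Omega\bigl(|\nabla u_n|^{p(x)-2}\nabla u_n-|\nabla u|^{p(x)-2}\nabla u\bigr)\cdot(\nabla u_n-\nabla u)\,dx,\\
Q_n&=\int_\Omega\mu(x)\bigl(|\nabla u_n|^{q(x)-2}\nabla u_n-|\nabla u|^{q(x)-2}\nabla u\bigr)\cdot(\nabla u_n-\nabla u)\,dx,
\end{align*}
so $P_n\to0$ and $Q_n\to0$. From $P_n\to0$ the nonnegative integrand tends to $0$ in $L^1(\Omega)$, hence a.e.\ along a subsequence, and the elementary fact that $\bigl(|\xi_k|^{r-2}\xi_k-|\eta|^{r-2}\eta\bigr)\cdot(\xi_k-\eta)\to0$ implies $\xi_k\to\eta$ (the left side is coercive in $|\xi_k|$ and vanishes only at $\xi_k=\eta$) gives $\nabla u_n\to\nabla u$ a.e. Young's inequality on the cross terms yields a pointwise bound $|\nabla u_n|^{p(x)}\le 2P_n(x)+C|\nabla u|^{p(x)}$, where $P_n(x)$ denotes the integrand of $P_n$; so along a further subsequence on which $P_n(\cdot)$ admits an $L^1$-majorant, $|\nabla u_n-\nabla u|^{p(x)}$ is $L^1$-dominated and $\to0$ a.e., whence $\int_\Omega|\nabla u_n-\nabla u|^{p(x)}\,dx\to0$ by dominated convergence. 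The identical argument with $\mu$ carried throughout and $Q_n$ in place of $P_n$ gives $\int_\Omega\mu(x)|\nabla u_n-\nabla u|^{q(x)}\,dx\to0$. Adding, $\rho_{\mathcal H}(\nabla u_n-\nabla u)\to0$, hence $\|\nabla u_n-\nabla u\|_{\mathcal H}\to0$ by Proposition~\ref{Prop:2.2a}(v), and the standard subsequence argument upgrades this to convergence of the full sequence, i.e.\ $u_n\to u$ in $W_0^{1,\mathcal H}(\Omega)$.

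The step I expect to be the main obstacle is this last passage in the $(S_+)$ argument, from the vanishing of the monotonicity integral to modular (and hence norm) convergence. The quantitative monotonicity estimates for $\xi\mapsto|\xi|^{r-2}\xi$ degenerate as $r\downarrow 1$ and depend on the point through $r=p(x)$ or $q(x)$, so no single Simon-type inequality is available; one must instead route through pointwise a.e.\ convergence of the gradients and recombine via Young's inequality and dominated convergence, while ensuring that in the $q(x)$-term the weight $\mu$ stays attached to $|\nabla u_n|$ and $|\nabla u|$ at every step so that the dominating functions remain integrable.
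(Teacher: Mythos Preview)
Your proof is essentially correct, but there is nothing to compare it against in this paper: the paper does not prove Proposition~\ref{Prop:2.7} at all. The statement is quoted directly from \cite{Crespo-Blanco} (the citation appears at the end of the proposition), and no argument is supplied here. So your write-up goes well beyond what the paper does.

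On the substance of your argument: the $C^1$ part is the standard route (pointwise differentiation, dominated convergence for the G\^ateaux derivative, then continuity of the associated Nemytskii operators to upgrade to $C^1$), and your care in keeping $\mu^{1/q(x)}$ attached to the gradient is exactly right for this setting. For $(S_+)$, the decomposition into the nonnegative pieces $P_n$ and $Q_n$, followed by a.e.\ convergence of gradients along a subsequence and an $L^1$-majorant built from Young's inequality, is a clean way to handle the lack of a uniform Simon-type lower bound when $p(x)$ and $q(x)$ vary. The only step that deserves an extra word is the ``$L^1$-majorant for $P_n(\cdot)$ along a subsequence'': this uses the standard fact that a sequence converging in $L^1$ has a subsequence dominated a.e.\ by a fixed $L^1$ function, which you might cite explicitly. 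The final subsequence-to-full-sequence upgrade is correct because the limit is uniquely determined by the weak limit $u$.
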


Next, we present and prove a fractional-Hardy-type inequality which we use to keep the singular integral $\int_{\Omega}\left(\frac{|u|^{p(x)}}{|x|^{\alpha p(x)}}+\mu(x)\frac{|u|^{q(x)}}{|x|^{\alpha q(x)}}\right)dx$ under the control so that it does not exhibit an uncontrolled growth (blow up). When $\alpha\equiv 1$,
the multi-phase case of \eqref{e3.11mm} is given in \cite{Avci1}, while the anisotropic case is given in \cite{Avci2}.

\medskip
\noindent
\begin{lemma}\label{Lem:3.3aa} Assume that $(H_1), (H_2)$ hold. If, additionally, $\alpha p^- > 1$ and $\alpha q^+ >1$,  then the following hold:\\
$(i)$ There exist constants $\hat{H}, \kappa$ independent of $u$ such that the inequality
\begin{equation}\label{e3.11mm}
\int_{\Omega}\left(\frac{|u|^{p(x)}}{p(x)|x|^{\alpha p(x)}}+\mu(x)\frac{|u|^{q(x)}}{q(x)|x|^{\alpha q(x)}}\right)dx \leq \hat{H}\|u\|_{1,\mathcal{H},0}^{\kappa}
\end{equation}
holds for all $u \in W_0^{1,\mathcal{H}}(\Omega)$.\\
\medskip
\noindent
$(ii)$ There exists a constant $M > 1$ such that $|x| < M$ for all $x \in \Omega \setminus \{0\}$. Then,
\begin{equation}\label{e3.11mn}
\int_{\Omega}\left(\frac{|u|^{p(x)}}{|x|^{\alpha p(x)}}+\mu(x)\frac{|u|^{q(x)}}{|x|^{\alpha q(x)}}\right)dx > \frac{1}{M^{\alpha q^+}}\|u\|^{{p^- \wedge q^+}}_{\mathcal{H}},\,\,\ \forall u \in W_0^{1,\mathcal{H}}(\Omega),
\end{equation}
where $p^- \wedge q^+= \min\{p^-, q^+\}$ determined according to the magnitude of $u$.
\end{lemma}
\begin{proof} $(i)$
First, we obtain a fractional-Hardy-type inequality. To do so, we consider a general constant exponent $s$ with $1<s<N$. \\
For $u \in C_0^\infty(\Omega)$, we have
\begin{equation}\label{e3.111b}
\nabla \cdot \left( \frac{x}{|x|^{\alpha s}} \right) = \frac{N - \alpha s}{|x|^{\alpha s}}.
\end{equation}
Hence,
\begin{equation}\label{e3.111bb}
\int_\Omega \mu(x)\,\frac{|u|^s}{|x|^{\alpha s}} dx = \frac{1}{N-\alpha s} \int_\Omega \mu(x)\,  |u|^s\, \nabla \cdot \left( \frac{x}{|x|^{\alpha s}} \right) dx.
\end{equation}
Applying the Divergence Theorem along with integrating by parts provides
\begin{align}\label{e3.111c}
\int_\Omega \mu(x)\,\frac{|u|^s}{|x|^{\alpha s}} dx &= \frac{1}{N-\alpha s} \int_\Omega \mu(x)\,  |u|^s\, \nabla \cdot \left( \frac{x}{|x|^{\alpha s}} \right)dx \nonumber \\
& = -\frac{1}{N-\alpha s} \int_\Omega \mu(x)\,\nabla (|u|^s)\cdot \frac{x}{|x|^{\alpha s}} dx,
\end{align}
which is equal to
\begin{equation}\label{e3.111d}
\int_\Omega \mu(x)\,\frac{|u|^s}{|x|^{\alpha s}} dx = -\frac{s}{N-\alpha s} \int_\Omega \mu(x)\,|u|^{s-2}u \nabla u \cdot \frac{x}{|x|^{\alpha s}} dx.
\end{equation}
Employing the Cauchy-Schwarz inequality gives
\begin{align}\label{e3.111e}
\int_\Omega \mu(x)\,\frac{|u|^s}{|x|^{\alpha s}} dx & \leq \frac{s}{N-\alpha s} \int_\Omega \mu(x)\,\frac{|u|^{s-1}}{|x|^{\alpha s-1}} |\nabla u| dx \nonumber \\
& = \frac{s}{N-\alpha s} \int_\Omega \mu(x)^{\frac{s-1}{s}}\,\frac{|u|^{s-1}}{|x|^{\alpha s-1}} \mu(x)^{\frac{1}{s}}\,|\nabla u| dx
\end{align}
Applying Hölder's inequality, and noticing that $(\alpha s-1)(\frac{s}{s-1})< \alpha s$, and assuming that $|x|<1$ (otherwise the result is trivial), gives
\begin{align}\label{e3.111f}
\int_\Omega \mu(x)\,\frac{|u|^s}{|x|^{\alpha s}} dx \leq & \frac{s}{N-\alpha s} \left( \int_\Omega \mu(x)\, \frac{|u|^s}{|x|^{(\alpha s-1)(\frac{s}{s-1})}} dx \right)^{\frac{s-1}{s}} \left( \int_\Omega \mu(x)\,|\nabla u|^s dx \right)^{\frac{1}{s}}\nonumber \\
&\leq \frac{s}{N-\alpha s} \left( \int_\Omega \mu(x)\,\frac{|u|^s}{|x|^{\alpha s}} dx \right)^{\frac{s-1}{s}} \left( \int_\Omega \mu(x)\, |\nabla u|^s dx \right)^{\frac{1}{s}},
\end{align}
which yields the following fractional-Hardy-type inequality
\begin{align}\label{e3.111g}
 \int_\Omega \mu(x)\,\frac{|u|^s}{|x|^{\alpha s}} dx \leq H_{\mu,s} \int_\Omega |\nabla u|^s dx,
\end{align}
where $H_{\mu,s}:= \|\mu\|_{\infty}\left(\frac{s}{N-\alpha s}\right)^{s}$ (we will let $H_{1,s}:= \left(\frac{s}{N-\alpha s}\right)^{s}$ if $\mu\equiv1$). However, by the density argument, \eqref{e3.111g} holds for any $u \in W_0^{1,s}(\Omega)$.\\

Next, to ensure a rigorous analysis without overlapping conditions, we partition $\Omega$ into the following disjoint measurable sets:
\begin{align}\label{e3.11mnaa}
\Omega_{<}^{(a)}:=\Omega_{<} \cap \{|u(x)| \geq |x|^{\alpha}\}, \quad
\Omega_{<}^{(b)}:=\Omega_{<} \cap \{|u(x)| < |x|^{\alpha}\},\quad \nonumber \\
\Omega_{\geq}^{(c)}:=\Omega_{\geq} \cap \{|u(x)| < |x|^{\alpha}\}, \quad
\Omega_{\geq}^{(d)}:=\Omega_{\geq} \cap \{|u(x)| \geq |x|^{\alpha}\},
\end{align}
where
\begin{equation}\label{e3.11mnab}
\Omega_{<}:=\{x \in \Omega: |\nabla u(x)|< 1 \}, \quad \Omega_{\geq}:=\{ x \in \Omega: |\nabla u(x)|\geq 1\}.
\end{equation}
Since $\Omega$ is bounded and connected, we obviously have $\Omega=\Omega_{<}^{(a)} \cup \Omega_{<}^{(b)} \cup \Omega_{\geq}^{(c)} \cup \Omega_{\geq}^{(d)}$.\\

\medskip
\noindent
\textbf{Step 1: Analysis on $\Omega_{<}$}. \\
For $x \in \Omega_{<}$, using \eqref{e3.111g} we have
\begin{align}\label{e3.112cb}
\int_{\Omega_{<}}|\nabla u|dx \geq \int_{\Omega_{<}}|\nabla u|^{q^+}dx \geq \frac{1}{H_{\mu,q^+}} \int_{\Omega_{<}}\mu(x)\,\frac{|u|^{q^+}}{|x|^{\alpha q^+}}dx,
\end{align}
and
\begin{align}\label{e3.112cbb}
\int_{\Omega_{<}}|\nabla u|dx \geq \int_{\Omega_{<}}|\nabla u|^{p^+}dx \geq \frac{1}{H_{1,p^+}} \int_{\Omega_{<}}\frac{|u|^{p^+}}{|x|^{\alpha p^+}}dx.
\end{align}

\medskip
\noindent
\textit{Estimate on  $\Omega_{<}^{(a)}$}: In this region we have $|u(x)| \geq |x|^\alpha$, and hence
\begin{align}\label{e3.112cc}
\int_{\Omega_{<}^{(a)}}\mu(x)\,\frac{|u|^{q^+}}{|x|^{\alpha q^+}}dx \geq \int_{\Omega_{<}^{(a)}}\mu(x)\,\frac{|u|^{q(x)}}{|x|^{\alpha q(x)}}dx,
\end{align}
and
\begin{align}\label{e3.112ccc}
\int_{\Omega_{<}^{(a)}}\frac{|u|^{p^+}}{|x|^{\alpha p^+}}dx \geq \int_{\Omega_{<}^{(a)}}\frac{|u|^{p(x)}}{|x|^{\alpha p(x)}}dx,
\end{align}
Therefore,
\begin{align}\label{e3.112cd}
\int_{\Omega_{<}^{(a)}}|\nabla u|dx \geq \frac{1}{2H_{1}}\int_{\Omega_{<}^{(a)}}\left(\frac{|u|^{p(x)}}{|x|^{\alpha p(x)}}+\mu(x)\,\frac{|u|^{q(x)}}{|x|^{\alpha q(x)}}\right)dx,
\end{align}
where $\frac{1}{H_{1}}:=\frac{1}{H_{1,p^+}} \wedge \frac{1}{H_{\mu,q^+}}$.\\

\medskip
\noindent
\textit{Estimate on  $\Omega_{<}^{(b)}$}: In this region we have $|u(x)| < |x|^\alpha$, therefore
\begin{align}\label{e3.112cdd}
\int_{\Omega_{<}}|\nabla u|dx \geq \int_{\Omega_{<}}|\nabla u|^{q^-}dx \geq \frac{1}{H_{\mu,q^-}} \int_{\Omega_{<}}\mu(x)\,\frac{|u|^{q^-}}{|x|^{\alpha q^-}}dx,
\end{align}
and
\begin{align}\label{e3.112cde}
\int_{\Omega_{<}}|\nabla u|dx \geq \int_{\Omega_{<}}|\nabla u|^{p^-}dx \geq \frac{1}{H_{1,p^-}} \int_{\Omega_{<}}\frac{|u|^{p^-}}{|x|^{\alpha p^-}}dx.
\end{align}
Since
\begin{align}\label{e3.112cdf}
\int_{\Omega_{<}^{(b)}}\mu(x)\,\frac{|u|^{q^-}}{|x|^{\alpha q^-}}dx \geq \int_{\Omega_{<}^{(b)}}\mu(x)\,\frac{|u|^{q(x)}}{|x|^{\alpha q(x)}}dx,
\end{align}
and
\begin{align}\label{e3.112cdg}
\int_{\Omega_{<}^{(b)}}\frac{|u|^{p^-}}{|x|^{\alpha p^-}}dx \geq \int_{\Omega_{<}^{(b)}}\frac{|u|^{p(x)}}{|x|^{\alpha p(x)}}dx,
\end{align}
we have
\begin{align}\label{e3.112cdh}
\int_{\Omega_{<}^{(b)}}|\nabla u|dx \geq \frac{1}{H_{\mu,q^-}}\int_{\Omega_{<}^{(b)}}\mu(x)\,\frac{|u|^{q(x)}}{|x|^{\alpha q(x)}}dx,
\end{align}
and
\begin{align}\label{e3.112cdk}
\int_{\Omega_{<}^{(b)}}|\nabla u|dx \geq \frac{1}{H_{1,p^-}}\int_{\Omega_{<}^{(b)}}\frac{|u|^{p(x)}}{|x|^{\alpha p(x)}}dx.
\end{align}
Therefore,
\begin{align}\label{e3.112cl}
\int_{\Omega_{<}^{(b)}}|\nabla u|dx \geq \frac{1}{2H_{2}}\int_{\Omega_{<}^{(b)}}\left(\frac{|u|^{p(x)}}{|x|^{\alpha p(x)}}+\mu(x)\,\frac{|u|^{q(x)}}{|x|^{\alpha q(x)}}\right)dx,
\end{align}
where $\frac{1}{H_{2}}:=\frac{1}{H_{1,p^-}} \wedge \frac{1}{H_{\mu,q^-}}$.\\

\medskip
\noindent
\textbf{Step 2: Analysis on $\Omega_{\geq}$.}
For $x \in \Omega_{\geq}$, using \eqref{e3.111g} we have
\begin{align}\label{e3.113a}
\int_{\Omega_{\geq}}|\nabla u|^{p(x)}dx \geq \int_{\Omega_{\geq}}|\nabla u|^{p^-}dx \geq \frac{1}{H_{\mu,p^-}} \int_{\Omega_{\geq}}\mu(x)\,\frac{|u|^{p^-}}{|x|^{\alpha p^-}}dx,
\end{align}
and
\begin{align}\label{e3.113b}
\int_{\Omega_{\geq}}|\nabla u|^{p(x)}dx \geq \int_{\Omega_{\geq}}|\nabla u|^{p^-}dx \geq \frac{1}{H_{1,p^-}} \int_{\Omega_{\geq}}\frac{|u|^{p^-}}{|x|^{\alpha p^-}}dx,
\end{align}

\medskip
\noindent
\textit{Estimate on  $\Omega_{\geq}^{(c)}$}: In this region we have $|u(x)| < |x|^\alpha$. Since
\begin{align}\label{e3.114a}
\int_{\Omega_{\geq}^{(c)}}\mu(x)\,\frac{|u|^{p^-}}{|x|^{\alpha p^-}}dx \geq \int_{\Omega_{\geq}^{(c)}}\mu(x)\,\frac{|u|^{q(x)}}{|x|^{\alpha q(x)}}dx,
\end{align}
and
\begin{align}\label{e3.114b}
\int_{\Omega_{\geq}^{(c)}}\frac{|u|^{p^-}}{|x|^{\alpha p^-}}dx \geq \int_{\Omega_{\geq}^{(c)}}\frac{|u|^{p(x)}}{|x|^{\alpha p(x)}}dx,
\end{align}
it reads
\begin{align}\label{e3.114c}
\int_{\Omega_{\geq}^{(c)}}|\nabla u|^{p(x)}dx \geq \frac{1}{H_{\mu,p^-}}\int_{\Omega_{\geq}^{(c)}}\mu(x)\,\frac{|u|^{q(x)}}{|x|^{\alpha q(x)}}dx,
\end{align}
and
\begin{align}\label{e3.114d}
\int_{\Omega_{\geq}^{(c)}}|\nabla u|^{p(x)}dx \geq \frac{1}{H_{1,p^-}}\int_{\Omega_{\geq}^{(c)}}\frac{|u|^{p(x)}}{|x|^{\alpha p(x)}}dx.
\end{align}
Therefore,
\begin{align}\label{e3.114e}
\int_{\Omega_{\geq}^{(c)}}|\nabla u|^{p(x)}dx \geq \frac{1}{2H_{3}}\int_{\Omega_{\geq}^{(c)}}\left(\frac{|u|^{p(x)}}{|x|^{\alpha p(x)}}+\mu(x)\,\frac{|u|^{q(x)}}{|x|^{\alpha q(x)}}\right)dx,
\end{align}
where $\frac{1}{H_{3}}:=\frac{1}{H_{1,p^-}} \wedge \frac{1}{H_{\mu,p^-}}$.\\

\medskip
\noindent
\textit{Estimate on  $\Omega_{\geq}^{(d)}$}: In this region, we have $|u(x)| \geq |x|^\alpha$. Considering the sufficiently large values of $|x|$, we have
\begin{align}\label{e3.115a}
\int_{\Omega_{\geq}^{(d)}}\mu(x)\,\frac{|u|^{q(x)}}{|x|^{\alpha q(x)}}dx \leq \|\mu\|_{\infty}\int_{\Omega_{\geq}^{(d)}}|u|^{q^+}dx,
\end{align}
and
\begin{align}\label{e3.115b}
\int_{\Omega_{\geq}^{(d)}}\frac{|u|^{p(x)}}{|x|^{\alpha p(x)}}dx \leq \int_{\Omega_{\geq}^{(d)}}|u|^{q^+}dx.
\end{align}
Hence,
\begin{align}\label{e3.115c}
\int_{\Omega_{\geq}^{(d)}}\left(\frac{|u|^{p(x)}}{|x|^{\alpha p(x)}}+\mu(x)\,\frac{|u|^{q(x)}}{|x|^{\alpha q(x)}}\right)dx \leq (1+\|\mu\|_{\infty})\int_{\Omega_{\geq}^{(d)}}|u|^{q^+}dx.
\end{align}
Since  $\Omega_{<}^{(a)}, \Omega_{<}^{(b)}, \Omega_{\geq}^{(c)}$ and $\Omega_{\geq}^{(d)}$ are measurable subsets of $\Omega$ with $\Omega=\Omega_{<}^{(a)} \cup \Omega_{<}^{(b)} \cup \Omega_{\geq}^{(c)} \cup \Omega_{\geq}^{(d)}$, we may extend
all integrands by zero outside their respective sets in \eqref{e3.112cd}, \eqref{e3.112cl}, \eqref{e3.114e}, and then integrate over all of $\Omega$, which gives
\begin{align}\label{e3.115f}
&\int_{\Omega}\left(\frac{|u|^{p(x)}}{|x|^{\alpha p(x)}}+\mu(x)\,\frac{|u|^{q(x)}}{|x|^{\alpha q(x)}}\right)dx \nonumber \\
& \leq 2(H_{1}+H_{2})\int_{\Omega}|\nabla u|dx+2H_{3}\int_{\Omega}|\nabla u|^{p(x)}dx+ (1+\|\mu\|_{\infty})\int_{\Omega}|u|^{q^+}dx.
\end{align}
To conclude part $(i)$, we employ the embeddings in Proposition \ref{Prop:2.7a}, which provides the desired result
\begin{align*}\label{e3.115g}
\int_{\Omega}\left(\frac{|u|^{p(x)}}{|x|^{\alpha p(x)}}+\mu(x)\,\frac{|u|^{q(x)}}{|x|^{\alpha q(x)}}\right)dx \leq \hat{H}\|u\|_{1,\mathcal{H},0}^{\kappa},
\end{align*}
where
\[
\kappa=
\begin{cases}
1   & \text{if }\, \|u\|_{1,\mathcal{H},0} < 1,\\
 q^+ & \text{if }\, \|u\|_{1,\mathcal{H},0} \geq 1,
\end{cases}
\]
and
$\hat{H}:=2(H_{1}+H_{2})\hat{c}_1\vee2H_{3}\hat{c}_2\vee (1+\|\mu\|_{\infty})\hat{c}_3$, where $\hat{c}_1,\hat{c}_2,\hat{c}_3$ are the corresponding embedding constants.

\medskip
\noindent
$(ii)$ Since $\Omega$ is bounded, there exists $M > 1$ such that $|x| < M$ for all $x \in \Omega$. Thus, for any $x \in \Omega \setminus \{0\}$, we have $|x|^{\alpha p(x)} < M^{\alpha p(x)} \leq M^{\alpha p^+}$ and $|x|^{\alpha q(x)} < M^{\alpha q(x)} \leq M^{\alpha q^+}$. Therefore, $|x|^{\alpha p(x)}\vee|x|^{\alpha q(x)}< M^{\alpha q^+}$, which gives
\begin{align}\label{e3.11nt}
\int_{\Omega}\left(\frac{|u|^{p(x)}}{|x|^{\alpha p(x)}}+\mu(x)\frac{|u|^{q(x)}}{|x|^{\alpha q(x)}}\right)dx &> \frac{1}{ M^{\alpha q^+}}\int_{\Omega}\left(|u|^{p(x)}+\mu(x)|u|^{q(x)}\right)dx \nonumber \\
&> \frac{1}{ M^{\alpha q^+}}\|u\|^{{p^- \wedge q^+}}_{\mathcal{H}}.
\end{align}
\end{proof}

\section{Variational Framework and Main Results}
We define the energy functional $\mathcal{I}:W_0^{1,\mathcal{H}}(\Omega)\rightarrow \mathbb{R}$ corresponding to equation (\ref{e1.1}) by
\begin{align*}
\mathcal{I}(u)=
&\int_{\Omega}\left(\frac{|\nabla u|^{p(x)}}{p(x)}+\mu(x)\frac{|\nabla u|^{q(x)}}{q(x)}\right)dx+\int_{\Omega}\left(\frac{|u|^{p(x)}}{p(x)|x|^{\alpha p(x)}}+\mu(x)\frac{|u|^{q(x)}}{q(x)|x|^{\alpha q(x)}}\right)dx\\
&-\lambda\int_{\Omega}F(x,u)dx,
\end{align*}
or
\begin{align*}
\mathcal{I}(u)=\varrho_{\mathcal{H}}(u)+\mathcal{G}(u)-\lambda\int_{\Omega}F(x,u)dx,
\end{align*}
where $\mathcal{G}(u):=\int_{\Omega}\left(\frac{|u|^{p(x)}}{p(x)|x|^{\alpha p(x)}}+\mu(x)\frac{|u|^{q(x)}}{q(x)|x|^{\alpha q(x)}}\right)dx $.

\begin{definition}\label{Def:3.1} A function $u\in W_0^{1,\mathcal{H}}(\Omega)$ is called a (weak) solution to problem \eqref{e1.1} if
\begin{align}\label{e3.2}
&\int_{\Omega}(|\nabla u|^{p(x)-2}\nabla u+\mu(x)|\nabla u|^{q(x)-2}\nabla u)\cdot\nabla \varphi dx+\int_{\Omega}\left(\frac{|u|^{p(x)-2}u}{|x|^{\alpha p(x)}}+\mu(x)\frac{|u|^{q(x)-2}u}{|x|^{\alpha q(x)}}\right)\varphi dx\nonumber\\
&=\lambda\int_{\Omega}f(x,u) \varphi dx,\,\,\ \forall \varphi\in W_0^{1,\mathcal{H}}(\Omega),
\end{align}
where $F(x,t)=\int_{0}^{t}f(x,s)ds$.
\end{definition}

\begin{lemma}\label{Lem:3.1}
$\mathcal{I} \in C^{1}(W_0^{1,\mathcal{H}}(\Omega),\mathbb{R})$ with the derivative
\begin{align}\label{e3.2aa}
\langle \mathcal{I}^\prime (u), \varphi \rangle&=\langle\varrho^{\prime}_{\mathcal{H}}(u),\varphi\rangle+\langle\mathcal{G}^{\prime}(u),\varphi\rangle-\lambda\langle f(x,u), \varphi\rangle,\,\,\ \forall \varphi\in W_0^{1,\mathcal{H}}(\Omega)
\end{align}
Moreover, the critical points of $\mathcal{I}$ are the solutions of problem \eqref{e1.1}.
\end{lemma}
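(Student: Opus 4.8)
The plan is to verify that $\mathcal{I}$ splits into three pieces, each of which is $C^1$ on $W_0^{1,\mathcal{H}}(\Omega)$, and then identify the critical points. The first piece, $\varrho_{\mathcal{H}}$, is handled immediately: Proposition \ref{Prop:2.7} already asserts $\varrho_{\mathcal{H}}\in C^1(W_0^{1,\mathcal{H}}(\Omega),\mathbb{R})$ with the stated derivative. So the work reduces to the singular term $\mathcal{G}$ and the nonlinearity term $u\mapsto \lambda\int_\Omega F(x,u)\,dx$.

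For $\mathcal{G}$, I would first invoke Lemma \ref{Lem:3.3aa}$(i)$ to guarantee that $\mathcal{G}(u)$ is finite for every $u\in W_0^{1,\mathcal{H}}(\Omega)$, so the functional is well defined. Then I would compute the Gateaux derivative directly: for fixed $u,\varphi$, differentiate $t\mapsto \mathcal{G}(u+t\varphi)$ under the integral sign, which formally yields
\[
\langle \mathcal{G}'(u),\varphi\rangle=\int_\Omega\left(\frac{|u|^{p(x)-2}u}{|x|^{\alpha p(x)}}+\mu(x)\frac{|u|^{q(x)-2}u}{|x|^{\alpha q(x)}}\right)\varphi\,dx.
\]
To justify the differentiation under the integral and the continuity of $u\mapsto\mathcal{G}'(u)$ from $W_0^{1,\mathcal{H}}(\Omega)$ into its dual, I would produce an $L^1(\Omega)$-dominating function for the difference quotients; the bound again comes from the fractional-Hardy inequality of Lemma \ref{Lem:3.3aa}, applied to $u+t\varphi$ for $|t|\le 1$ together with the embedding $W_0^{1,\mathcal{H}}(\Omega)\hookrightarrow L^{\mathcal{H}}(\Omega)$ and H\"older's inequality (Proposition \ref{Prop:2.1}). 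Continuity of the derivative follows by a standard argument: if $u_n\to u$ in $W_0^{1,\mathcal{H}}(\Omega)$, pass to a subsequence converging a.e., use the continuity of the Nemytskii-type maps $u\mapsto |u|^{p(x)-2}u/|x|^{\alpha p(x)}$ and $u\mapsto \mu(x)|u|^{q(x)-2}u/|x|^{\alpha q(x)}$ on the relevant weighted spaces, apply a Vitali/dominated convergence argument with the Hardy bound as the uniform integrable envelope, and conclude by the usual subsequence principle.

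For the term $\Psi(u):=\lambda\int_\Omega F(x,u)\,dx$, this is the classical situation: since $f$ is a Carath\'eodory function and, under the growth hypotheses to be imposed on $f$ later in the paper (subcritical growth relative to $p^*$), the compact embedding $W_0^{1,\mathcal{H}}(\Omega)\hookrightarrow L^{h(x)}(\Omega)$ of Proposition \ref{Prop:2.7a}$(ii)$ makes $\Psi$ of class $C^1$ with $\langle\Psi'(u),\varphi\rangle=\lambda\int_\Omega f(x,u)\varphi\,dx$; I would cite the standard Nemytskii-operator results from \cite{Fan,Radulescu}. Adding the three derivatives gives \eqref{e3.2aa}. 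Finally, the identification of critical points with weak solutions is immediate: comparing \eqref{e3.2aa} with Definition \ref{Def:3.1}, the equation $\langle\mathcal{I}'(u),\varphi\rangle=0$ for all $\varphi\in W_0^{1,\mathcal{H}}(\Omega)$ is exactly the weak formulation \eqref{e3.2}.

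The main obstacle is the singular term $\mathcal{G}$: unlike the standard double-phase energy, one cannot differentiate under the integral sign for free because of the weight $|x|^{-\alpha p(x)}$, $|x|^{-\alpha q(x)}$ blowing up at the origin. The crux is therefore to set up a uniform (in $t$, locally) integrable dominating function so that both Lebesgue differentiation under the integral and continuity of $\mathcal{G}'$ go through; this is precisely where Lemma \ref{Lem:3.3aa}$(i)$ does the heavy lifting, by trading the singular weight for a power of the $W_0^{1,\mathcal{H}}$-norm via the embeddings $W_0^{1,\mathcal{H}}(\Omega)\hookrightarrow W_0^{1,p^-}(\Omega)$ and $W_0^{1,\mathcal{H}}(\Omega)\hookrightarrow W_0^{1,q^+}(\Omega)$.
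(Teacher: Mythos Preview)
Your proposal is correct and follows essentially the same route as the paper: invoke Proposition~\ref{Prop:2.7} for $\varrho_{\mathcal{H}}$, handle the nonlinearity via $(\mathbf{f}_1)$ and the compact embedding, and concentrate the effort on $\mathcal{G}$ by using Lemma~\ref{Lem:3.3aa}$(i)$ to build an $L^1$-dominating function that justifies differentiation under the integral. The only noteworthy difference is in the continuity of $\mathcal{G}'$: the paper uses the pointwise inequality $\big||\xi|^{s-2}\xi-|\psi|^{s-2}\psi\big|\le C_s|\xi-\psi|\{|\xi|+|\psi|\}^{s-2}$ (from \cite{Chipot}) to estimate $\langle\mathcal{G}'(u_n)-\mathcal{G}'(u),\varphi\rangle$ directly, whereas you opt for an a.e.-subsequence plus Vitali/dominated-convergence argument; both are standard and adequate here.
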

\begin{proof}
By Proposition \ref{Prop:2.7}, $\varrho_{\mathcal{H}}$  is a continuously G\^{a}teaux differentiable functional with the derivative $\langle\varrho^{\prime}_{\mathcal{H}}(\cdot),\varphi\rangle$. Moreover, using the assumption $(\mathbf{f}_{1})$, and applying Holder inequality along with the related embeddings shows that the functional $\int_{\Omega}F(x,u)dx$  is also continuously G\^{a}teaux differentiable with the derivative $\langle f(x,u), \varphi\rangle=\int_{\Omega}f(x,u) \varphi dx$. Thus, it remains only to determine the G\^{a}teaux derivative of $\mathcal{G}$ and show that it has the same regularity on $W_0^{1,\mathcal{H}}(\Omega)$.\\
To begin, by the Mean Value Theorem, there are  $0< \varepsilon_1, \varepsilon_2< 1$ such that
\begin{align}\label{e3.21d}
\langle \mathcal{G}^{\prime}(u),\varphi \rangle&=\lim_{t \to 0}\int_{\Omega}\frac{1}{t}\left(\frac{|u+t\varphi|^{p(x)}-|u|^{p(x)}}{p(x)|x|^{\alpha p(x)}}+\frac{\mu(x)|u+t\varphi|^{q(x)}-|u|^{q(x)}}{q(x)|x|^{\alpha q(x)}}\right)dx \nonumber \\
&=\lim_{t \to 0}\int_{\Omega}\frac{1}{t}\left(\frac{d}{d\gamma}\frac{|u+\gamma t\varphi|^{p(x)}}{p(x)|x|^{\alpha p(x)}}\bigg|_{\gamma=\varepsilon_{1}}+\frac{d}{d\gamma}\frac{\mu(x)|u+\gamma t\varphi|^{q(x)}}{q(x)|x|^{\alpha q(x)}}\bigg|_{\gamma=\varepsilon_{2}}\right)dx  \nonumber \\
& =\lim_{t \to 0}\int_{\Omega}\left(|x|^{-\alpha p(x)}|u+\varepsilon_1 t\varphi|^{p(x)-2}(u+\varepsilon_1 t\varphi ) \varphi \right. \nonumber \\
&+ \left. |x|^{-\alpha q(x)}\mu(x)|u+\varepsilon_2 t\varphi|^{q(x)-2}(u+\varepsilon_2 t\varphi) \varphi\right)  dx, \nonumber \\
\end{align}
for all $u, \varphi \in W_0^{1,\mathcal{H}}(\Omega)$, $\gamma \in \mathbb{R}$. Using the Young inequality, we obtain
\begin{align}\label{e3.21f}
  \bigg||x|^{-\alpha p(x)}|u+\varepsilon_1  t\varphi|^{p(x)-2}(u+\varepsilon_1 t\varphi) \varphi \bigg| & \leq k_1 |x|^{-\alpha p(x)}(|u|^{p(x)}+|\varphi|^{p(x)}),
\end{align}
and
\begin{align}\label{e3.21fg}
  \bigg||x|^{-\alpha q(x)}\mu(x)|u+\varepsilon_2 t\varphi|^{q(x)-2}(u+\varepsilon_2 t\varphi) \varphi\bigg| & \leq k_2 |x|^{-\alpha q(x)}(\mu(x)|u|^{q(x)}+\mu(x)|\varphi|^{q(x)}).
\end{align}
where $k_1:=\frac{2^{p(x)-1}(p(x)-1)+1}{p(x)}$, and $k_2:=\frac{2^{q(x)-1}(q(x)-1)+1}{q(x)}$.
\newline Using (\ref{e3.21f}),(\ref{e3.21fg}) and Lemma \ref{Lem:3.3aa}, we can write
\begin{align}\label{e3.21fh}
&\bigg||x|^{-\alpha p(x)}|u+\varepsilon_1  t\varphi|^{p(x)-2}(u+\varepsilon_1 t\varphi) \varphi \bigg|+\bigg||x|^{-\alpha q(x)}\mu(x)|u+\varepsilon_2 t\varphi|^{q(x)-2}(u+\varepsilon_2 t\varphi) \varphi\bigg| \nonumber \\
& \leq k_1(|x|^{-\alpha p(x)}|u|^{p(x)}+|x|^{-\alpha q(x)}\mu(x)|u|^{q(x)})+k_2(|x|^{-\alpha p(x)}|\varphi|^{p(x)}+|x|^{-\alpha q(x)}\mu(x)|\varphi|^{q(x)})\nonumber \\
& \leq k \hat{H}\left(\|u\|_{1,\mathcal{H},0}^{\kappa}+\|\varphi\|_{1,\mathcal{H},0}^{\kappa}\right),
\end{align}
where $k:=\frac{2^{q^+}(q^+-1)}{p^-}\vee k_1\vee k_2$. \newline Therefore, by the Lebesgue Dominated Convergence Theorem it reads
\begin{align}\label{e3.21g}
\langle \mathcal{G}^{\prime}(u),\varphi \rangle & =\int_{\Omega}\lim_{t \to 0}\left(|x|^{-\alpha p(x)}|u+\varepsilon_1 t\varphi|^{p(x)-2}(u+\varepsilon_1 t\varphi) \right. \nonumber \\
&+ \left. |x|^{-\alpha q(x)}\mu(x)|u+\varepsilon_2 t\varphi|^{q(x)-2}(u+\varepsilon_2 t\varphi)\right) \varphi  dx, \nonumber \\
&=\int_{\Omega}\left(|x|^{-\alpha p(x)}|u|^{p(x)-2}u+\mu(x)|x|^{-\alpha q(x)}|u|^{q(x)-2}u\right) \varphi dx.
\end{align}
Since the right-hand side of (\ref{e3.21g}), as a function of $\varphi$, is a linear functional, $\mathcal{G}^{\prime}$ is linear on $W_0^{1,\mathcal{H}}(\Omega)$.\\ Next, by the Young inequality and Lemma \ref{Lem:3.3aa}, we obtain
\begin{align}\label{e3.22g}
|\langle \mathcal{G}^{\prime}(u),\varphi \rangle| & \leq \int_{\Omega}\left(|x|^{-p(x)}|u|^{p(x)-1} |\varphi|+|x|^{-\alpha q(x)}\mu(x)|u|^{q(x)-1} |\varphi|\right) dx \nonumber \\
& \leq \frac{q^+}{p^-} \int_{\Omega}\left(\frac{|u|^{p(x)}}{p(x)|x|^{\alpha p(x)}}+\frac{|\varphi|^{p(x)}}{p(x)|x|^{\alpha p(x)}}\right)
+\left(\frac{\mu(x)|u|^{q(x)}}{q(x)|x|^{\alpha q(x)}}+\frac{\mu(x)|\varphi|^{q(x)}}{q(x)|x|^{\alpha q(x)}}\right)dx \nonumber \\
& \leq \frac{q^+}{p^-} \hat{H}\left(\|u\|_{1,\mathcal{H},0}^{\kappa}+\|\varphi\|_{1,\mathcal{H},0}^{\kappa}\right).
\end{align}
Therefore, for all $u \in W_0^{1,\mathcal{H}}(\Omega)$, we have
\begin{align}\label{e3.21gb}
\|\mathcal{G}^{\prime}(u)\|_{W_0^{1,\mathcal{H}}(\Omega)^*}&=\sup_{\|\varphi\|_{1,\mathcal{H},0}\leq 1}|\langle \mathcal{G}^{\prime}(u),\varphi \rangle|
 \leq \frac{q^+}{p^-}\hat{H} \left(\|u\|_{1,\mathcal{H},0}^{\kappa}+1\right),
\end{align}
which means that $\mathcal{G}^{\prime}$ is bounded. Therefore, $\mathcal{G}$ is G\^{a}teaux differentiable whose derivative is given by the formula (\ref{e3.21g}).
Now, we proceed with the continuity of $\mathcal{G}^{\prime}$. Let $(u_n) \subset W_0^{1,\mathcal{H}}(\Omega)$ such that $u_n \to u$ in $W_0^{1,\mathcal{H}}(\Omega)$. Applying the inequality (Proposition 17.2, \cite{Chipot})
\begin{equation}\label{e3.21gbr}
\left|\left\vert \xi\right\vert ^{m-2}\xi-\left\vert \psi\right\vert^{m-2}\psi \right| \leq C_{m}\left\vert\xi-\psi\right\vert\{|\xi|+|\psi|\}^{m-2}, \quad \xi,\psi\in \mathbb{R}^{N},\,\, m> 1,
\end{equation}
gives
\begin{align}\label{e3.21gbd}
\left|\langle\mathcal{G}^{\prime}(u_{n})-\mathcal{G}^{\prime}(u),\varphi\rangle\right|& \leq \int_{\Omega}|x|^{-p(x)}\left||u_{n}|^{p(x)-2}u_{n}-|u_0|^{p(x)-2}u_0 \right||\varphi| dx \nonumber \\
&+ \int_{\Omega}|x|^{-\alpha q(x)}\mu(x)\left||u_{n}|^{q(x)-2}u_{n}-|u|^{q(x)-2}u \right||\varphi| dx \nonumber \\
& \leq C_{p}\int_{\Omega}|x|^{-p(x)}\{|u_{n}|+|u|\}^{p(x)-2}|u_{n}-u| |\varphi|dx \nonumber \\
&+ C_{q}\int_{\Omega}|x|^{-\alpha q(x)}\mu(x)\{|u_{n}|+|u|\}^{q(x)-2}|u_{n}-u||\varphi| dx.
\end{align}
Thus, considering that $u_{n} \to u$ in $L(\Omega)$ and $(u_{n}) $ is bounded, it reads
\begin{equation}\label{e3.21k} \|\mathcal{G}^{\prime}(u_n)-\mathcal{G}^{\prime}(u)\|_{W_0^{1,\mathcal{H}}(\Omega)^*}=\sup_{\|\varphi\|_{1,\mathcal{H},0} \leq1}\left|\langle\mathcal{G}^{\prime}(u_{n})-\mathcal{G}^{\prime}(u),\varphi\rangle\right| \to 0.
\end{equation}
Therefore, $\mathcal{G}$ is of class $C^{1}(W_0^{1,\mathcal{H}}(\Omega),\mathbb{R})$.\\
Finally, $\mathcal{I}^{\prime}(u)=0$ is exactly the weak formulation of \eqref{e1.1}, so critical points of $\mathcal{I}$ are weak solutions. This completes the proof.
\end{proof}

\medskip
\noindent
The main results of the present paper is given below.

\begin{theorem}\label{Thm:3.3}
Assume that the following assumptions hold:\\
\begin{itemize}
  \item [$(\mathbf{\beta_0})$] $\beta \in C_{+}\left(\overline{\Omega }\right)$ such that $q^{+}<\beta^{-}\leq\beta ^{+}<p^{\ast}(x)$\,\, $\forall x\in \overline{\Omega}$.
  \item [$(\mathbf{f}_{1})$] $f:\overline{\Omega }\times\mathbb{R}\rightarrow\mathbb{R}$ is a Carath\'{e}odory function and satisfies the growth condition
\begin{equation*}
\left\vert f(x,t)\right\vert \leq c_{1}+c_{2}\left\vert t\right\vert^{\beta (x)-1},\quad \forall \left( x,t\right) \in \overline{\Omega }\times\mathbb{R},
\end{equation*}
where $c_{1}$ and $c_{2}$ are positive constants.
  \item [$(\mathbf{f}_{2})$] $f(x,t)=o\left( \left\vert t\right\vert^{q^{+}-1}\right)$, $t\rightarrow 0$ uniformly $\forall x\in \overline{\Omega}$.
  \item [$(\mathbf{AR})$] $\exists K>0$, $\theta >q^{+}$ such that
  \begin{equation*}
  0<\theta F(x,t)\leq f(x,t)t,\quad \left\vert t\right\vert \geq K \text{ a.e.}\, x\in \overline{\Omega }.
  \end{equation*}
\end{itemize}
Then problem \eqref{e1.1} has at least one nontrivial positive weak solution.
\end{theorem}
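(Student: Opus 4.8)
The plan is to obtain the solution as a critical point of $\mathcal{I}$ via the Mountain-Pass theorem, then upgrade it to a positive solution by a truncation argument together with the strong minimum principle. First I would truncate the nonlinearity: set $\tilde f(x,t)=f(x,t)$ for $t\geq 0$ and $\tilde f(x,t)=0$ for $t<0$, with primitive $\tilde F$, and work with the corresponding functional $\tilde{\mathcal{I}}(u)=\varrho_{\mathcal{H}}(u)+\mathcal{G}(u)-\lambda\int_\Omega \tilde F(x,u)\,dx$. Testing $\tilde{\mathcal{I}}'(u)$ with $u^-=\min\{u,0\}$ and using that $\varrho'_{\mathcal{H}}$ and $\mathcal{G}'$ are, respectively, the double-phase operator and the singular term, one gets $\varrho_{\mathcal{H}}(u^-)$-type and $\mathcal{G}(u^-)$-type quantities $\leq 0$, forcing $u^-\equiv 0$; hence any nontrivial critical point of $\tilde{\mathcal{I}}$ is nonnegative and, because $\tilde f$ agrees with $f$ on $[0,\infty)$, is a weak solution of \eqref{e1.1}. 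So it suffices to produce a nontrivial critical point of $\tilde{\mathcal{I}}$.

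Next I would verify the geometric hypotheses of the Mountain-Pass theorem for $\tilde{\mathcal{I}}$. For the local minimum at $0$: by $(\mathbf{f}_1)$ and $(\mathbf{f}_2)$, for every $\varepsilon>0$ there is $C_\varepsilon>0$ with $|\tilde F(x,t)|\leq \varepsilon|t|^{q^+}+C_\varepsilon|t|^{\beta(x)}$; combining this with the compact embeddings $W_0^{1,\mathcal H}(\Omega)\hookrightarrow L^{q^+}(\Omega)$ and $W_0^{1,\mathcal H}(\Omega)\hookrightarrow L^{\beta(x)}(\Omega)$ (valid since $q^+<\beta^-\leq\beta^+<p^*(x)$ by $(\mathbf{\beta_0})$) and the modular bounds of Proposition~\ref{Prop:2.2a}(iii), for $\|u\|_{1,\mathcal H,0}=r$ small one obtains $\tilde{\mathcal{I}}(u)\geq \|u\|_{1,\mathcal H,0}^{q^+}\big(\tfrac{1}{q^+}-\lambda\varepsilon C_{\mathcal H}^{q^+}\big)-\lambda C_\varepsilon C\|u\|_{1,\mathcal H,0}^{\beta^-}\geq \delta>0$, using $\mathcal G\geq 0$ and that $\beta^->q^+$ makes the higher-order term negligible for small $r$ (choose $\varepsilon$ small and then $r$ small). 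For the far point: fix $0\leq w\in C_0^\infty(\Omega)$ with $w\not\equiv 0$; the Ambrosetti--Rabinowitz condition $(\mathbf{AR})$ gives $F(x,t)\geq c_3 t^\theta-c_4$ for $t\geq 0$, so $\tilde{\mathcal{I}}(tw)\leq \max\{t^{p^-},t^{q^+}\}\big(\varrho_{\mathcal H}(w)+\mathcal G(w)\big)-\lambda c_3 t^\theta\int_\Omega w^\theta\,dx+\lambda c_4|\Omega|\to-\infty$ as $t\to+\infty$, since $\theta>q^+$; hence $\tilde{\mathcal{I}}(e)<0$ for $e=t_0w$ with $t_0$ large, and $\|e\|_{1,\mathcal H,0}>r$.

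The main obstacle is the Palais--Smale condition, which is where the singular term and the double-phase structure interact. Let $(u_n)$ be a $(\mathbf{PS})$ sequence for $\tilde{\mathcal{I}}$. Boundedness follows in the standard way from $(\mathbf{AR})$: computing $\tilde{\mathcal{I}}(u_n)-\tfrac{1}{\theta}\langle\tilde{\mathcal{I}}'(u_n),u_n\rangle$ and using $\theta>q^+$, together with the elementary bound $\langle\varrho'_{\mathcal H}(u_n),u_n\rangle\geq p^-\varrho_{\mathcal H}(u_n)$ and $\langle\mathcal G'(u_n),u_n\rangle\geq p^-\mathcal G(u_n)$, one gets $c+o(\|u_n\|)\geq \big(\tfrac{1}{q^+}-\tfrac{1}{\theta}\big)\big(\varrho_{\mathcal H}(u_n)+\mathcal G(u_n)\big)$, and since $\varrho_{\mathcal H}(u_n)\geq \tfrac{1}{q^+}\rho_{\mathcal H}(\nabla u_n)$ controls $\|u_n\|_{1,\mathcal H,0}$ via Proposition~\ref{Prop:2.2a}, $(u_n)$ is bounded. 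By reflexivity pass to $u_n\rightharpoonup u$, with $u_n\to u$ in $L^{\beta(x)}(\Omega)$ and in every $L^{h(x)}(\Omega)$ with $h<p^*$ by Proposition~\ref{Prop:2.7a}(ii). The term $\lambda\langle \tilde f(x,u_n),u_n-u\rangle\to 0$ by $(\mathbf{f}_1)$, Hölder and strong convergence; the subtle point is $\langle\mathcal G'(u_n),u_n-u\rangle\to 0$, which I would handle by the generalized dominated convergence / Vitali argument already used in Lemma~\ref{Lem:3.1}: the integrands are dominated using the Hardy-type bound of Lemma~\ref{Lem:3.3aa} (so the weight $|x|^{-\alpha p(x)}$ never causes blow-up) and $u_n\to u$ a.e.\ and in $L^{p(x)}$, $L^{q(x)}$. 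Consequently $\langle\varrho'_{\mathcal H}(u_n),u_n-u\rangle=\langle\tilde{\mathcal{I}}'(u_n),u_n-u\rangle-\langle\mathcal G'(u_n),u_n-u\rangle+\lambda\langle\tilde f(x,u_n),u_n-u\rangle\to 0$, and the $(S_+)$-property of $\varrho'_{\mathcal H}$ from Proposition~\ref{Prop:2.7} yields $u_n\to u$ strongly in $W_0^{1,\mathcal H}(\Omega)$. This establishes $(\mathbf{PS})$. The Mountain-Pass theorem then provides a critical point $u$ with $\tilde{\mathcal{I}}(u)\geq\delta>0$, so $u\not\equiv 0$; by the truncation step $u\geq 0$, and finally the strong minimum principle (applicable since the double-phase operator is, away from where $\mu$ vanishes, of the required structure, and $\mathcal{S}(x,\mu,u)$ is nonnegative) upgrades $u\geq 0$, $u\not\equiv0$ to $u>0$ in $\Omega$, completing the proof.
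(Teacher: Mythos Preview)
Your proposal is correct and follows essentially the same route as the paper: Mountain-Pass geometry from $(\mathbf{f}_1)$, $(\mathbf{f}_2)$ and $(\mathbf{AR})$, the $(\mathbf{PS})$ condition via boundedness from $(\mathbf{AR})$ and then the $(S_+)$-property of $\varrho'_{\mathcal H}$ (after disposing of the $f$-term by compact embeddings and the $\mathcal G'$-term by the Hardy-type estimate of Lemma~\ref{Lem:3.3aa}), followed by the truncation $f_+$, the test with $u^-$, and the strong minimum principle. The only difference is organizational: you truncate first and run the whole Mountain-Pass argument for $\tilde{\mathcal I}$ (choosing a nonnegative $w$ for the far point so that $(\mathbf{AR})$ still applies), whereas the paper first applies the Mountain-Pass theorem to $\mathcal I$ via Lemmas~\ref{Lem:3.4}--\ref{Lem:3.5} and only afterwards introduces $f_+$ to argue positivity; your ordering is slightly cleaner but the content is the same.
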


\medskip
\noindent
Note that $(\mathbf{AR})$ stands for the Ambrosetti–Rabinowitz condition \cite{Ambrosetti-Rabinowitz}.\\

\medskip
\noindent
To obtain the result of Theorem \ref{Thm:3.3}, we need to show Lemma  \ref{Lem:3.4} and Lemma \ref{Lem:3.5} hold.

\begin{lemma}\label{Lem:3.4}
Suppose  $(\mathbf{\beta_0})$, $(\mathbf{f}_{1})$, $(\mathbf{f}_{2})$ and $(\mathbf{AR})$
hold. Then the following statements hold:
\begin{itemize}
  \item [$(i)$] There exist two positive real numbers $\gamma $ and $\eta$ such that $\mathcal{I}(u)\geq \eta>0$, for all $u\in W_0^{1,\mathcal{H}}(\Omega)$
with $\|u\|_{1,\mathcal{H},0} =\gamma $.
  \item [$(ii)$] There exists a $\hat{u}\in W_0^{1,\mathcal{H}}(\Omega)$ such that $\|\hat{u}\|_{1,\mathcal{H},0} >\gamma $\, and \,  $\mathcal{I}(\hat{u})<0$.
\end{itemize}
\end{lemma}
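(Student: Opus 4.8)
The plan is to verify the two geometric conditions of the Mountain-Pass theorem directly from the growth and smallness hypotheses on $f$, using the modular–norm relations of Proposition~\ref{Prop:2.2a} together with the fractional-Hardy estimate \eqref{e3.11mm}.

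\textbf{Part $(i)$ (the mountain around the origin).} First I would fix $\|u\|_{1,\mathcal{H},0}=\gamma<1$, so that by Proposition~\ref{Prop:2.2a}$(iv)$ (applied to $\nabla u$) one has $\varrho_{\mathcal{H}}(u)\geq \frac{1}{q^{+}}\|u\|_{1,\mathcal{H},0}^{q^{+}}$, while the singular term $\mathcal{G}(u)\geq 0$ since $\mu\geq 0$. For the nonlinear part, combining $(\mathbf{f}_{2})$ with $(\mathbf{f}_{1})$ yields, for every $\varepsilon>0$, a constant $c_\varepsilon>0$ with $|F(x,t)|\leq \varepsilon|t|^{q^{+}}+c_\varepsilon|t|^{\beta(x)}$; integrating and using the continuous embeddings $W_0^{1,\mathcal{H}}(\Omega)\hookrightarrow L^{q^{+}}(\Omega)$ and $W_0^{1,\mathcal{H}}(\Omega)\hookrightarrow L^{\beta(x)}(\Omega)$ (valid by $(\mathbf{\beta_0})$ and Proposition~\ref{Prop:2.7a}$(ii)$, since $\beta^{+}<p^{*}(x)$), together with Proposition~\ref{Prop:2.2} for the $L^{\beta(x)}$ modular, gives
\[
\int_\Omega F(x,u)\,dx \leq \varepsilon C_1\|u\|_{1,\mathcal{H},0}^{q^{+}} + c_\varepsilon C_2\big(\|u\|_{1,\mathcal{H},0}^{\beta^{-}}+\|u\|_{1,\mathcal{H},0}^{\beta^{+}}\big).
\]
Hence for $\|u\|_{1,\mathcal{H},0}=\gamma\le 1$,
\[
\mathcal{I}(u)\geq \Big(\tfrac{1}{q^{+}}-\lambda\varepsilon C_1\Big)\gamma^{q^{+}} - \lambda c_\varepsilon C_2\big(\gamma^{\beta^{-}}+\gamma^{\beta^{+}}\big).
\]
Choosing $\varepsilon$ so small that $\tfrac{1}{q^{+}}-\lambda\varepsilon C_1>0$, and then $\gamma>0$ small enough (this is where $\beta^{-}>q^{+}$ is essential, so the subtracted terms are of strictly higher order in $\gamma$), we obtain $\mathcal{I}(u)\geq\eta>0$ on the sphere $\|u\|_{1,\mathcal{H},0}=\gamma$.

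\textbf{Part $(ii)$ (a point below zero far out).} The plan is to integrate the Ambrosetti–Rabinowitz condition $(\mathbf{AR})$ to obtain $F(x,t)\geq a_0|t|^{\theta}-a_1$ for a.e.\ $x\in\overline\Omega$ and all $t\in\mathbb{R}$, with constants $a_0>0$, $a_1\geq 0$ (standard: $(\mathbf{AR})$ gives $F(x,t)\geq c(x)|t|^\theta$ for $|t|\ge K$, and $\inf_{\overline\Omega}c>0$ follows from continuity of $F(\cdot,\pm K)$ and positivity; the $|t|\le K$ range contributes the additive constant). Now fix any $0\not\equiv w\in C_0^\infty(\Omega)$ with $w\ge 0$ and evaluate $\mathcal{I}(tw)$ for $t>1$ large. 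Using Proposition~\ref{Prop:2.2a}$(iv)$ again, $\varrho_{\mathcal{H}}(tw)\le \frac{1}{p^{-}}\,t^{q^{+}}\max\{1,\|w\|_{1,\mathcal{H},0}^{q^{+}}\}$ (up to adjusting for $\|tw\|$ crossing $1$), and similarly $\mathcal{G}(tw)\le t^{q^{+}}\,\mathcal{G}_0(w)$ for an appropriate finite constant $\mathcal{G}_0(w)$ depending on $w$ — finiteness of $\mathcal{G}(w)$ comes from \eqref{e3.11mm}. Meanwhile
\[
\lambda\int_\Omega F(x,tw)\,dx \geq \lambda a_0 t^{\theta}\int_\Omega |w|^{\theta}\,dx - \lambda a_1|\Omega|,
\]
so that $\mathcal{I}(tw)\leq C_3 t^{q^{+}} - \lambda a_0 t^{\theta}\|w\|_{L^{\theta}}^{\theta} + C_4$. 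Since $\theta>q^{+}$, the right-hand side tends to $-\infty$ as $t\to\infty$; picking $\hat u=t_0 w$ with $t_0$ large enough gives $\mathcal{I}(\hat u)<0$ and $\|\hat u\|_{1,\mathcal{H},0}>\gamma$.

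\textbf{Main obstacle.} The only genuinely delicate point is bookkeeping the modular–norm switch in Proposition~\ref{Prop:2.2a}: the exponent controlling $\varrho_{\mathcal{H}}$ and $\mathcal{G}$ is $q^{+}$ when the relevant norm exceeds $1$ and $p^{-}$ otherwise, and in $(ii)$ the argument $tw$ passes from norm $<1$ to norm $>1$ as $t$ grows, so one must state the estimates uniformly for large $t$ (where $q^{+}$ is the operative exponent). Once the inequality $\mathcal{I}(tw)\le C_3 t^{q^{+}}+C_4-\lambda a_0 t^{\theta}\|w\|_{L^\theta}^\theta$ is in place, the gap $\theta>q^{+}$ furnished by $(\mathbf{AR})$ closes the argument; symmetrically, in $(i)$ the gap $\beta^{-}>q^{+}$ furnished by $(\mathbf{\beta_0})$ is what makes the sphere estimate work. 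Everything else is a routine application of Hölder's inequality, the Sobolev embeddings of Proposition~\ref{Prop:2.7a}, and Lemma~\ref{Lem:3.3aa}.
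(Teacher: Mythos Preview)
Your proposal is correct and follows essentially the same route as the paper: in $(i)$ you combine $(\mathbf{f}_1)$--$(\mathbf{f}_2)$ to get $F(x,t)\le \varepsilon|t|^{q^+}+c_\varepsilon|t|^{\beta(x)}$, drop the nonnegative singular term, apply the modular--norm relation and embeddings, and use $\beta^->q^+$; in $(ii)$ you integrate $(\mathbf{AR})$ and exploit $\theta>q^+$ along a ray $t\phi$. The only cosmetic difference is that in $(i)$ you absorb $\lambda$ by taking $\varepsilon$ small, whereas the paper fixes $\varepsilon$ and restricts $\lambda\in(0,\hat\lambda)$; your choice is the more standard one and avoids an implicit constraint on $\lambda$ (also note that the relevant item of Proposition~\ref{Prop:2.2a} for $\|u\|_{1,\mathcal{H},0}<1$ is $(iii)$, not $(iv)$).
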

\begin{proof}
$(i)$ By $(\mathbf{f}_{1})$ and $(\mathbf{f}_{2})$, one can write
\begin{equation}\label{e4.a1}
F(x,t) \leq \varepsilon |t|^{q^+} + C_{\varepsilon} |t|^{\beta(x)}, \quad \forall \left( x,t\right) \in \overline{\Omega }\times\mathbb{R}.
\end{equation}
Then
\begin{align}\label{e4.b1}
\mathcal{I}(u)&\geq\frac{1}{q^+}\rho_\mathcal{H}(\nabla u)+\frac{1}{q^+M^{\alpha\tau}}\rho_\mathcal{H}(u)-\lambda\varepsilon\int_{\Omega}|u|^{q^+}dx-\lambda C_{\varepsilon}\int_{\Omega}|u|^{\beta(x)}dx \nonumber\\
&\geq \frac{1}{q^+}\rho_\mathcal{H}(\nabla u)-\lambda\varepsilon|u|^{q^+}_{q^+}-\lambda C_{\varepsilon}\left(|u|^{\beta^+}_{\beta^+}+|u|^{\beta^-}_{\beta^-} \right).
\end{align}
Using Proposition \ref{Prop:2.7a}, and assuming that $\|u\|_{1,\mathcal{H},0} <1$ provides
\begin{align}\label{e4.c1}
\mathcal{I}(u)&\geq \frac{1}{q^+}\|u\|_{1,\mathcal{H},0}^{q^+}-\lambda\varepsilon c_3\|u\|_{1,\mathcal{H},0}^{q^+}-\lambda C_{\varepsilon} c_4\|u\|_{1,\mathcal{H},0}^{\beta^-} \nonumber\\
&=\left(\frac{1}{q^+}-\lambda\varepsilon c_3 \right)\|u\|_{1,\mathcal{H},0}^{q^+}-\lambda C_{\varepsilon} c_4 \|u\|_{1,\mathcal{H},0}^{\beta^{-}}.
\end{align}
Then if we choose an upper bound $\hat{\lambda}$ as $\hat{\lambda}=\frac{1}{q^+C(\varepsilon)}$, then for any $\lambda \in (0,\hat{\lambda})$ and $\gamma=\|u\|_{1,\mathcal{H},0} <1$ small enough, there exists a real number $\eta$ such that $\mathcal{I}(u)\geq \eta>0$  for any $u\in W_0^{1,\mathcal{H}}(\Omega)$.\\
$(ii)$ Due to $(\mathbf{AR})$, we can write
\begin{equation}\label{e4.e1}
F(x,t) \geq  c_5|t|^{\theta}, \quad \forall x \in \overline{\Omega },\,\, |t| \geq K.
\end{equation}
Thus, for $0\neq\phi\in W_0^{1,\mathcal{H}}(\Omega)$ and $t>1$, we have
\begin{align}\label{e4.f1}
\mathcal{I}(t\phi)&\leq\frac{t^{q^{+}}}{p^-}\left(\rho_\mathcal{H}(\nabla\phi)+\int_{\Omega}\left(\frac{|\phi|^{p(x)}}{|x|^{\alpha p(x)}}+\mu(x)\frac{|\phi|^{q(x)}}{|x|^{\alpha q(x)}}\right)dx\right)-\lambda c_5 t^{\theta}\int_{\Omega}|u|^{\theta}dx,
\end{align}
which implies that $\mathcal{I}(t\phi) \to -\infty$ as $t \to \infty$. Therefore, letting $\hat{u}=t\phi$, and hence $\|\hat{u}\|_{1,\mathcal{H},0}=t\|\phi\|_{1,\mathcal{H},0}$, leads to  $\|\hat{u}\|_{1,\mathcal{H},0} >1 >\gamma$ and  $\mathcal{I}(\hat{u})<0$ provided $t$ is large enough.
\end{proof}

\begin{definition}\label{Def:2.1}
Let $X$ be a Banach space, and $J:X\rightarrow\mathbb{R}$ be a $C^{1}$-functional. We say that $J$ satisfies the Palais-Smale condition ($(\mathbf{PS})$ for short) \cite{Ambrosetti-Rabinowitz} if: every
sequence $(u_{n}) \subset X$ such that $J(u_{n})$ is bounded and $J^{\prime }(u_{n}) \rightarrow 0$ in $X^*$ admits a convergent subsequence in $X$.
\end{definition}

\begin{lemma}\label{Lem:3.5}
Suppose $(\mathbf{\beta_0})$, $(\mathbf{f}_{1})$ and $(\mathbf{AR})$ hold. Then $\mathcal{I}$ satisfies the $(\mathbf{PS})$
condition.
\end{lemma}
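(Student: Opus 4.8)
The plan is to follow the standard scheme for Mountain-Pass-type functionals. Let $(u_n)\subset W_0^{1,\mathcal{H}}(\Omega)$ be a $(\mathbf{PS})$ sequence, so that $|\mathcal{I}(u_n)|\le M$ for some $M>0$ and $\|\mathcal{I}^{\prime}(u_n)\|_{W_0^{1,\mathcal{H}}(\Omega)^{*}}\to 0$; write $\|\mathcal{I}^{\prime}(u_n)\|_{*}$ for this dual norm. First I would prove that $(u_n)$ is bounded in $W_0^{1,\mathcal{H}}(\Omega)$, then extract a weakly convergent subsequence $u_n\rightharpoonup u$, and finally upgrade this to strong convergence by means of the $(S_+)$-property of $\varrho^{\prime}_{\mathcal{H}}$ stated in Proposition \ref{Prop:2.7}.

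For the boundedness, I would test $\mathcal{I}^{\prime}(u_n)$ against $u_n$ and consider the combination $\mathcal{I}(u_n)-\frac{1}{\theta}\langle\mathcal{I}^{\prime}(u_n),u_n\rangle$ with the exponent $\theta>q^{+}$ from $(\mathbf{AR})$. Since $\langle\varrho^{\prime}_{\mathcal{H}}(u_n),u_n\rangle=\rho_{\mathcal{H}}(\nabla u_n)$ and $p(x)<q(x)\le q^{+}<\theta$, one gets $\varrho_{\mathcal{H}}(u_n)-\frac{1}{\theta}\langle\varrho^{\prime}_{\mathcal{H}}(u_n),u_n\rangle\ge\big(\tfrac{1}{q^{+}}-\tfrac{1}{\theta}\big)\rho_{\mathcal{H}}(\nabla u_n)$, while the singular part satisfies $\mathcal{G}(u_n)-\frac{1}{\theta}\langle\mathcal{G}^{\prime}(u_n),u_n\rangle\ge 0$ by the same exponent comparison together with $\mu(\cdot)\ge0$ and $|x|^{-\alpha s(x)}>0$. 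For the nonlinear part I would split $\Omega$ into $\{x:|u_n(x)|\ge K\}$, where $(\mathbf{AR})$ gives $F(x,u_n)-\frac{1}{\theta}f(x,u_n)u_n\le0$, and $\{x:|u_n(x)|<K\}$, where $(\mathbf{f}_1)$ bounds $F(x,u_n)-\frac{1}{\theta}f(x,u_n)u_n$ uniformly; this yields $-\lambda\int_{\Omega}\big(F(x,u_n)-\tfrac{1}{\theta}f(x,u_n)u_n\big)dx\ge -C_1$ for some constant $C_1>0$. Collecting these estimates gives $\big(\tfrac{1}{q^{+}}-\tfrac{1}{\theta}\big)\rho_{\mathcal{H}}(\nabla u_n)\le M+C_1+\tfrac{1}{\theta}\|\mathcal{I}^{\prime}(u_n)\|_{*}\|u_n\|_{1,\mathcal{H},0}$; if $\|u_n\|_{1,\mathcal{H},0}>1$, then $\rho_{\mathcal{H}}(\nabla u_n)\ge\|u_n\|_{1,\mathcal{H},0}^{p^{-}}$ by Proposition \ref{Prop:2.2a}$(iv)$, and since $p^{-}>1$ while $\|\mathcal{I}^{\prime}(u_n)\|_{*}\to0$, the sequence $(\|u_n\|_{1,\mathcal{H},0})$ must stay bounded.

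By reflexivity of $W_0^{1,\mathcal{H}}(\Omega)$ I would then pass, along a subsequence, to $u_n\rightharpoonup u$; the compact embeddings of Proposition \ref{Prop:2.7a}$(ii)$ together with the compact embedding $W_0^{1,\mathcal{H}}(\Omega)\hookrightarrow L^{\mathcal{H}}(\Omega)$ give $u_n\to u$ in $L^{\beta(x)}(\Omega)$ and in $L^{\mathcal{H}}(\Omega)$, and $u_n\to u$ a.e. in $\Omega$. Using $(\mathbf{f}_1)$, the H\"older inequality (Proposition \ref{Prop:2.1}), the boundedness of $(|u_n|^{\beta(x)-1})$ in $L^{\beta^{\prime}(x)}(\Omega)$ and $|u_n-u|_{\beta(x)}\to0$, one obtains $\int_{\Omega}f(x,u_n)(u_n-u)dx\to0$; combined with $\langle\mathcal{I}^{\prime}(u_n),u_n-u\rangle\to0$ (because $\|\mathcal{I}^{\prime}(u_n)\|_{*}\to0$ and $\|u_n-u\|_{1,\mathcal{H},0}$ is bounded), this gives $\langle\varrho^{\prime}_{\mathcal{H}}(u_n)+\mathcal{G}^{\prime}(u_n),u_n-u\rangle\to0$. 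Since $\varrho^{\prime}_{\mathcal{H}}(u)+\mathcal{G}^{\prime}(u)\in W_0^{1,\mathcal{H}}(\Omega)^{*}$ by Lemma \ref{Lem:3.1} and $u_n-u\rightharpoonup0$, it follows that $\langle(\varrho^{\prime}_{\mathcal{H}}+\mathcal{G}^{\prime})(u_n)-(\varrho^{\prime}_{\mathcal{H}}+\mathcal{G}^{\prime})(u),u_n-u\rangle\to0$.

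The final step uses that both $\varrho^{\prime}_{\mathcal{H}}$ and $\mathcal{G}^{\prime}$ are monotone operators, since $\xi\mapsto|\xi|^{s-2}\xi$ is monotone for $s>1$ and the coefficients $\mu(\cdot)\ge0$, $|x|^{-\alpha s(x)}>0$ are nonnegative. Hence the two nonnegative quantities $\langle\varrho^{\prime}_{\mathcal{H}}(u_n)-\varrho^{\prime}_{\mathcal{H}}(u),u_n-u\rangle$ and $\langle\mathcal{G}^{\prime}(u_n)-\mathcal{G}^{\prime}(u),u_n-u\rangle$ sum to a null sequence, so each of them tends to $0$; in particular, using weak convergence once more, $\langle\varrho^{\prime}_{\mathcal{H}}(u_n),u_n-u\rangle\to0$, whence $\limsup_n\langle\varrho^{\prime}_{\mathcal{H}}(u_n),u_n-u\rangle\le0$. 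The $(S_+)$-property from Proposition \ref{Prop:2.7} then yields $u_n\to u$ strongly in $W_0^{1,\mathcal{H}}(\Omega)$, which is exactly the $(\mathbf{PS})$ condition. I expect the main obstacle to be the singular term $\mathcal{G}^{\prime}$: a direct proof that $\langle\mathcal{G}^{\prime}(u_n),u_n-u\rangle\to0$ would require strong convergence in the weighted (Hardy) norm controlled by Lemma \ref{Lem:3.3aa}, which is not yet available at this stage; the monotonicity argument is what circumvents this difficulty and reduces everything to the $(S_+)$-property of the nonsingular operator $\varrho^{\prime}_{\mathcal{H}}$.
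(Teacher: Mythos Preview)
Your proof is correct and follows the same overall scheme as the paper: boundedness of the $(\mathbf{PS})$ sequence from the combination $\mathcal{I}(u_n)-\tfrac{1}{\theta}\langle\mathcal{I}'(u_n),u_n\rangle$ using $(\mathbf{AR})$, then weak convergence, then strong convergence via the $(S_+)$-property of $\varrho'_{\mathcal{H}}$ after showing that the $f$- and $\mathcal{G}'$-contributions to $\langle\mathcal{I}'(u_n),u_n-u\rangle$ vanish.

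The one genuine point of divergence is how you dispose of $\langle\mathcal{G}'(u_n),u_n-u\rangle$. The paper estimates $\langle\mathcal{G}'(u_n)-\mathcal{G}'(u),u_n-u\rangle$ directly via the pointwise inequality $\big||\xi|^{s-2}\xi-|\psi|^{s-2}\psi\big|\le C_s|\xi-\psi|\{|\xi|+|\psi|\}^{s-2}$ and strong $L^2$-convergence, whereas you argue that $\varrho'_{\mathcal{H}}$ and $\mathcal{G}'$ are both monotone, so the two nonnegative terms $\langle\varrho'_{\mathcal{H}}(u_n)-\varrho'_{\mathcal{H}}(u),u_n-u\rangle$ and $\langle\mathcal{G}'(u_n)-\mathcal{G}'(u),u_n-u\rangle$ must each tend to zero once their sum does. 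Your route is cleaner: it sidesteps any need to pass to the limit under the singular weight $|x|^{-\alpha s(x)}$, which the paper's direct estimate leaves somewhat implicit. The paper's approach, on the other hand, never invokes monotonicity of $\mathcal{G}'$ and would still work for nonmonotone lower-order perturbations, provided a suitable continuity estimate is available.
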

\begin{proof}
Let assume that there exists a sequence $(u_{n}) \subset W_0^{1,\mathcal{H}}(\Omega)$ such that
\begin{equation}\label{e4.1}
|\mathcal{I}(u_{n})| \leq c_0 \quad \text{ and } \quad \|\mathcal{I}^{\prime}(u_{n})\|_{W_0^{1,\mathcal{H}}(\Omega)^*}\rightarrow 0,\quad 0<c_0 \in \mathbb{R}.
\end{equation}
Using \eqref{e4.1}, $(\mathbf{AR})$, and Proposition \ref{Prop:2.2a}, we obtain
\begin{align}\label{e4.2}
c_0&\geq \mathcal{I}(u_n)=\varrho_{\mathcal{H}}(u_n)+\mathcal{G}(u_n)-\lambda\int_{\Omega}F(x,u_n)dx\nonumber\\
& \geq \left(\frac{1}{q^+}-\frac{1}{\theta}\right)\rho_\mathcal{H}(\nabla u_n)+\frac{1}{\theta}\int_{\Omega}\left(\rho_\mathcal{H}(\nabla u_n)+\frac{| u_n|^{p(x)}}{|x|^{\alpha p(x)}}+\mu(x)\frac{| u_n|^{q(x)}}{|x|^{\alpha q(x)}}-\lambda f(x,u_n)u_n\right)dx\nonumber\\
& \geq \left(\frac{1}{q^+}-\frac{1}{\theta}\right) \|u_{n}\|_{1,\mathcal{H},0}^{p^-}-\frac{1}{\theta}\|u_{n}\|_{1,\mathcal{H},0}\|\mathcal{I}^{\prime}(u_{n})\|_{W_0^{1,\mathcal{H}}(\Omega)^*}.
\end{align}
Therefore $(u_{n}) $ is bounded in $W_0^{1,\mathcal{H}}(\Omega)$. Since $W_0^{1,\mathcal{H}}(\Omega)$ is reflexive, passing to a subsequence, we have $u_{n}\rightharpoonup u_0 \in W_0^{1,\mathcal{H}}(\Omega)$. Then using $(\mathbf{f}_{1})$, Hölder inequality, and Proposition \ref{Prop:2.7a},  we obtain
\begin{align}\label{e4.3}
\left|\langle f(x,u_{n}), u_{n}-u_0\rangle\right|\leq c_1\left||u_n|^{\beta(x)-1} \right|_{\frac{\beta(x)}{\beta(x)-1}}\left| u_{n}-u_0\right|_{\beta(x)}+c_2\int_{\Omega}\left| u_{n}-u_0\right|dx \to 0.
\end{align}
First let's write
\begin{align}\label{e4.3a}
\langle\mathcal{G}^{\prime}(u_{n}),u_{n}-u_0\rangle= \langle\mathcal{G}^{\prime}(u_{n})-\mathcal{G}^{\prime}(u_{0}),u_{n}-u_0\rangle+\langle\mathcal{G}^{\prime}(u_{0}),u_{n}-u_0\rangle.
\end{align}
Taking into consideration that $\mathcal{G}^{\prime}(u_{0}) \in W^{1,\mathcal{H}}(\Omega)^*$, we obtain
\begin{align}\label{e4.4}
\langle\mathcal{G}^{\prime}(u_{0}),u_{n}-u_0\rangle=\int_{\Omega}\mathcal{G}^{\prime}(u_{0}) (u_{n}-u_0)dx \to 0.
\end{align}
Moreover, since $u_{n} \to u_0$ in $L^2(\Omega)$, $(u_{n})$ is bounded. Thus
\begin{align}\label{e4.4a}
\langle\mathcal{G}^{\prime}(u_{n})-\mathcal{G}^{\prime}(u_{0}),u_{n}-u_0\rangle&= \int_{\Omega}|x|^{-\alpha p(x)}\left(|u_{n}|^{p(x)-2}u_{n}-|u_0|^{p(x)-2}u_0 \right)(u_{n}-u_0) dx \nonumber \\
&+ \int_{\Omega}|x|^{-\alpha q(x)}\mu(x)\left(|u_{n}|^{q(x)-2}u_{n}-|u_0|^{q(x)-2}u_0 \right)(u_{n}-u_0) dx \nonumber \\
& \leq \int_{\Omega}|x|^{-\alpha p(x)}\left||u_{n}|^{p(x)-2}u_{n}-|u_0|^{p(x)-2}u_0 \right||u_{n}-u_0| dx \nonumber \\
&+ \int_{\Omega}|x|^{-\alpha q(x)}\mu(x)\left||u_{n}|^{q(x)-2}u_{n}-|u_0|^{q(x)-2}u_0 \right||u_{n}-u_0| dx \nonumber \\
& \leq C_{p}\int_{\Omega}|x|^{-\alpha p(x)}\{|u_{n}|+|u_{0}|\}^{p(x)-2}|u_{n}-u_0|^2 dx \nonumber \\
&+ C_{q}\int_{\Omega}|x|^{-\alpha q(x)}\mu(x)\{|u_{n}|+|u_{0}|\}^{q(x)-2}|u_{n}-u_0|^2 dx \to 0
\end{align}
where we applied the inequality \eqref{e3.21gbr}.
Thus
\begin{align}\label{e4.4b}
\langle\mathcal{G}^{\prime}(u_{n}),u_{n}-u_0\rangle \to 0.
\end{align}
Now, using \eqref{e4.1} gives
\begin{align}\label{e4.5}
\langle \mathcal{I}^\prime (u_{n}), u_{n}-u_0\rangle&=\langle\varrho^{\prime}_{\mathcal{H}}(u_{n}),u_{n}-u_0\rangle+\langle\mathcal{G}^{\prime}(u_{n}),u_{n}-u_0\rangle-\lambda\langle f(x,u_{n}), u_{n}-u_0\rangle \to 0.
\end{align}
However, \eqref{e4.3}, \eqref{e4.4b} and \eqref{e4.5} together implies that
\begin{align}\label{e4.6}
\langle\varrho^{\prime}_{\mathcal{H}}(u_{n}),u_{n}-u_0\rangle \to 0.
\end{align}
Thus, by Proposition \ref{Prop:2.7} (($S_+$)-property), $u_{n} \to u_0 \in W_0^{1,\mathcal{H}}(\Omega)$. In conclusion,  $\mathcal{I}$ satisfies the $(\mathbf{PS})$
condition.
\end{proof}

\begin{proof}[Proof of Theorem \ref{Thm:3.3}] By Lemma \ref{Lem:3.4}, $\mathcal{I}$ has Mountain-Pass geometry; and by Lemma \ref{Lem:3.5}, $\mathcal{I}$ satisfies $(\mathbf{PS})$. Also, $\mathcal{I}(0) =0$. Thus, by the \textit{Mountain-Pass Theorem}  \cite{Willem}:

\begin{itemize}
\item $u_0$ is a critical point of $\mathcal{I}$ at mountain pass level $c_{MP}$:
    \[
      c_{MP} := \inf_{\gamma \in \Gamma} \max_{t \in [0,1]}\mathcal{ I}(\gamma(t)) \geq \eta > 0,
    \]
    where $\Gamma = \{\gamma \in C([0,1], W^{1,\mathcal{H}}_0(\Omega)) : \gamma(0) = 0, \gamma(1) = \hat{u}\}$.
    \item Since $\mathcal{I}(u_0) \geq \eta > 0$, $u_0 \neq 0$ is a nontrivial critical point of $\mathcal{I}$ at level $c_{MP}$;\\ $\mathcal{I}(u_0)=c_{MP}$ and $\mathcal{I}'(u_0)=0$.
\end{itemize}
Therefore, by Lemma \ref{Lem:3.1}, $u_0 \in W_0^{1,\mathcal{H}}(\Omega)$ corresponds to a nontrivial weak solution of \eqref{e1.1}.\\

\medskip

\noindent
Define $f_{+}:\overline{\Omega }\times\mathbb{R}\rightarrow [0,\infty)$ by \\
$ f_{+}( x,t) =\left\{\begin{array}{cc}
f(x,t) & \text{if }\, t\geq0, \\
0 & \text{if }\, t<0.
\end{array}
\right.$\\
Then clearly the modified energy functional $\mathcal{I_{+}}$ is also of class $C^1$ on $W_0^{1,\mathcal{H}}(\Omega)$.\\
\medskip

\noindent
\textbf{Step 1.} \\
Define $u^-:=\max\{-u,0 \}$. We test \eqref{e3.2} by replacing  $u^-$ with $\varphi$ which  gives
\begin{align}\label{e3.2ac}
&\int_{\Omega}(|\nabla u|^{p(x)-2}\nabla u+\mu(x)|\nabla u|^{q(x)-2}\nabla u)\cdot\nabla u^- dx+\int_{\Omega}\left(\frac{|u|^{p(x)-2}u}{|x|^{\alpha p(x)}}+\mu(x)\frac{|u|^{q(x)-2}u}{|x|^{\alpha q(x)}}\right)u^- dx\nonumber\\
&=\lambda\int_{\Omega}f(x,u) u^- dx
\end{align}
for a nontrivial weak solution $u$ of \eqref{e1.1}. Then considering the decomposition $\Omega=\{x \in \Omega: u(x)\geq 0\} \cup \{x \in \Omega: u(x)<0\}$ for \eqref{e3.2ac}, we have
\begin{align}\label{e3.2ad}
&0\geq -\int_{\{u<0\}}(|\nabla u|^{p(x)}+\mu(x)|\nabla u|^{q(x)}) dx-\int_{\{u<0\}}\left(\frac{|u|^{p(x)}}{|x|^{\alpha p(x)}}+\mu(x)\frac{|u|^{q(x)}}{|x|^{\alpha q(x)}}\right) dx\nonumber\\
&=\lambda\int_{\{u\geq0\}}f(x,u) u^- dx \geq 0,
\end{align}
which implies that $u^-\equiv 0$. So, since $u=0$ on $\partial \Omega$, by the weak minimum principle there is a set of positive measure where $u \geq 0$, and $u \not\equiv 0$.\\
\medskip

\noindent
\textbf{Step 2.} \\
We claim that $u(x) > 0$ for all $x\in \Omega$. Suppose, on the contrary, that there exists an interior point $x_0 \in \Omega$ such that $u(x_0)=0$. Since  $u \geq 0$ a.e., this must be a global minimum of $u$. This, by the strong minimum principle, implies that $u$ is constant. That is, $u\equiv 0$ in a neighborhood of $x_0$. However, this contradicts $u \not\equiv 0$. Therefore $u(x) > 0$ for all $x\in \Omega$.

\end{proof}

\suppress{
\section*{Appendix}

\begin{lemma}\label{Lem:3.1a}
$\mathcal{G}^{\prime}$ is strictly monotone.
\end{lemma}
\begin{proof}
  For $u, v \in W_0^{1,\mathcal{H}}(\Omega)$ with $u\neq v$, we have
\begin{align}\label{e3.11zz}
  \langle \mathcal{G}^{\prime}(u)-\mathcal{G}^{\prime}(v), u- v \rangle &= \int_{\Omega}|x|^{-\alpha p(x)}\left(|u|^{p(x)-2}u-|v|^{p(x)-2}\upsilon \right)(u-v) dx \nonumber \\
  &+ \int_{\Omega}|x|^{-\alpha q(x)}\mu(x)\left(|u|^{q(x)-2}u-|v|^{q(x)-2}\upsilon \right)(u-v) dx \nonumber \\
  &\geq 2^{-q^{+}}M^{-\tau}\left(\int_{\Omega}|u-v|^{p(x)}dx +\int_{\Omega}\mu(x)|u-v|^{q(x)}dx\right) > 0,
\end{align}
where we apply the well-known inequality
\begin{equation*}
\left( \left\vert \xi\right\vert ^{s-2}\xi-\left\vert \psi\right\vert^{s-2}\psi \right)\cdot\left(\xi-\psi\right) \geq 2^{-s}\left\vert\xi-\psi\right\vert^{s}; \quad \xi,\psi\in \mathbb{R}^{N},\,\, s> 1.
\end{equation*}
\end{proof}
}

\section*{Funding}
This work was supported by Athabasca University Research Incentive Account [140111 RIA].

\section*{ORCID}
https://orcid.org/0000-0002-6001-627X

\end{document}